\font\dsrom=dsrom10 scaled 1200
\def \indic{\textrm{\dsrom{1}}}
\newtheorem{lemma}{Lemma}
\newtheorem{corollary}{Corollary}
\newtheorem{remark}{Remark}
\newtheorem{proposition}{Proposition}
\title{Stationary analysis of a single queue with remaining service time dependent arrivals}
\author{ Benjamin Legros$^1$ $\bullet$ Ali Devin Sezer$^2$ \\
~\\
$^1${\em Laboratoire Genie Industriel, CentraleSup\'{e}lec}
{\em Universit\'{e} Paris-Saclay}\\
{\em Grande Voie des Vignes, 92290 Chatenay-Malabry, France}
~\\
\\
{\em $^2$ Middle East Technical University}
{\em Institute of Applied Mathematics}\\
{\em Ankara, 06800, Turkey}
\\
 {\small \tt benjamin.legros@centraliens.net} $\bullet$ {\small \tt devin@metu.edu.tr}
}
\begin{document}
\maketitle

\begin{abstract}
We study a generalization of the $M/G/1$ system
(denoted by $rM/G/1$) 
with independent and identically distributed 
(iid) service times and with an arrival process whose arrival rate
$\lambda_0f(r)$ depends
on the remaining service time $r$ of the current customer being served.
We derive a natural stability condition and
provide a stationary analysis
under it both at service completion
times (of the queue length process) 
and in continuous time 
(of the queue length and the residual service time).
In particular, we show that the stationary
measure of queue length at service completion times is equal to that of
a corresponding $M/G/1$ system. For $f > 0$ we show that the continuous
time stationary measure of the $rM/G/1$ system is linked to the $M/G/1$
system via a time change.
As opposed to the $M/G/1$ queue, the stationary measure of queue length
of the $rM/G/1$ system at service completions differs from its marginal distribution
under the continuous time stationary measure. Thus, in general,
arrivals of the $rM/G/1$ system do not see time averages.
We derive formulas for the average queue
length, probability of an empty system 
and average waiting time under the continuous time stationary measure. 
We provide examples showing the effect of changing the reshaping function on 
the average waiting time.

\noindent \textbf{Keywords:} residual service time dependent arrivals,
reshaping function, queueing systems, performance evaluation, piecewise deterministic processes\\
\noindent \textbf{Mathematics Subject Classification (2010):} 90B22, 60K25, 68M20
\end{abstract}

\section{Introduction}
The goal of the present note is the steady state analysis
of a single server queueing system with iid service times and
an arrival process whose rate is a function of the remaining
service time of the current customer being served, if the server is busy,
or a constant $\lambda_0$ otherwise. This is a generalization of the
$M/G/1$ system. Because the arrival rate is allowed to depend on the remaining service
time we will denote it by the notation $`rM/G/1.'$
Arrival processes with remaining service time dependent rates
can be used to model systems where customers can directly estimate 
the remaining service time by observing the amount of work that a server 
has to treat and use this information to decide whether to join the queue or not.
This type of behavior occurs, for example, at
checkout queues in supermarkets. A potential application area for $rM/\cdot/\cdot$ systems
is call centers \cite{aksin07,gans03} with inbound and outbound calls. 
Modern call centers call out customers to connect them with a server even when all servers are busy \cite{pang14}; 
the decision to initiate an outbound call can use estimates of the remaining service time of the busy servers.
New approaches to call center modeling also allow the control of the arrival process of inbound calls by postponing their
routing to an agent or by giving incentives to callback later \cite{legros16}; such approaches can
make use of estimates of the remaining service time of servers. 
Generalizations of the $rM/G/1$ model may be useful in the analysis of these systems.

Queues with queue-length dependent and Markov modulated arrival or 
service time distributions have been studied in the literature, see, e.g.,
 \cite{dshalalow1991single,bekker2004queues,kumar2009enhanced,perry2013duality}.
The only works we are aware of allowing the arrival rate to depend
on the remaining service time are 
\cite{knessl1987busy,knessl1987markov,knessl1987markov2}; these works
study the remaining service time process (denoted by $U(t)$ in these works)
when the arrival rate and the service rate of the arriving customer depends 
on $U$ (\cite{knessl1987markov, knessl1987markov2}
further contain two state Markov modulation whose transition rates
depend on $U$). The analysis method used in these works is asymptotic
approximation as arrival, service and transition rates are scaled by a
parameter whose value is sent to $\infty.$ In the current work, we study,
within a narrower framework,
the joint queue length and remaining service time distribution and
our focus is on finding exact solutions.

To simplify exposition, 
we assume that the iid service times have a density,
denoted by $g(\cdot)$. We further comment on this assumption in Section \ref{s:conclusion}.
The arrival process of customers is Poisson with
constant arrival rate $\lambda_0$ if the system is empty or $\lambda_0 f(r)$ if the server is busy and 
the remaining service time of the customer being served is $r$.
In the particular case where $f(r)=1$ for $r\geq 0$, the system reduces to an $M/G/1$ queue. $f$ can be interpreted
in two ways: if $f(r) \in (0,1)$, $r \in {\mathbb R}_+$,
then $f(r)$ can be thought of as the probability 
that an arriving customer 
joins the queue after having observed the remaining service time $r$.
$f$ can also be thought of as a control parameter that transforms / reshapes, 
the constant arrival rate $\lambda_0$ to optimize system performance.
With this interpretation in mind, we will refer to $f$ as the ``reshaping function'' 
(the `$r$' in the abbreviation $rM/G/1$ refers also to ``reshaping'' of the arrival process).
For the latter interpretation, a natural condition on a reshape function is that it doesn't
change the overall average arrival rate to the system. 
In Proposition 
\ref{p:arrivals} of Subsection \ref{ss:averagear} the average arrival rate 
to an $rM/G/1$ system is computed to be 
$\alpha = \frac{\lambda_0}{1-\lambda_0(\bar{\nu} - \nu)}$, where $\nu = \int_0^\infty r g(r)dr$
is the average service length and $\bar{\nu} = \int_0^\infty F(r) g(r)dr$ with $F(r) = \int_0^r f(u)du.$ 
Thus, under the assumption
\begin{equation}\label{e:l0unchanged}
\nu = \bar{\nu}
\end{equation}
the average arrival rate of an $rM/G/1$ system remains $\lambda_0.$ 
This assumption will be in force in Section \ref{s:example} where we compare 
the average waiting times of a range of $rM/G/1$ systems with the same service time distribution
and average arrival rate $\lambda_0$ but different reshape functions.

A natural framework for the study of the $rM/G/1$ queue is the piecewise deterministic processes (PDP) 
of \cite{davis1993markov}.
Section \ref{s:construct} gives a construction of the $rM/G/1$ process as a
piecewise deterministic
Markov process based on this framework. The process is 
$X_t=(N_t,R_t)$; its first component represents
the number of customers 
(i.e., queue length, including the customer being served) 
in the system the second component represents the remaining service time.
Subsection \ref{ss:generator} gives its generator and Subsection \ref{ss:completion} derives 
the dynamics of the embedded random walk ${\mathcal N}$, which is the sequence of queue lengths observed at
service completion times; Proposition \ref{p:MG1reduce} shows that  
the dynamics of ${\mathcal N}$ equals
that of the embedded random walk (at service completion times) 
of an $M/G/1$ queue (whose state
process is denoted by $\bar{X}$) with constant arrival 
rate $\lambda_0$ and with iid service times $\{\bar{\sigma}_k, k=1,2,3,\}$
where $\bar{\sigma}_k = F(\sigma_k)$, and $\{\sigma_k\}$ are the iid service times of the original $rM/G/1$ 
system. The stationary distribution of the $rM/G/1$ system at service completions (and arrivals) follows
from this reduction; the details are given in Section \ref{s:servcomp}. 
Proposition \ref{p:invariantmeasure}
derives the stability condition $\rho \doteq \lambda_0 \bar{\nu} < 1$, \eqref{e:pk1} gives the
expected stationary queue length at service completions and \eqref{e:mgf} gives the stationary moment generating
function of the queue length distribution at service completions.

As opposed to $M/G/1$ queues, the stationary distribution of queue length 
of an $rM/G/1$ system in continuous time does not
equal its stationary distribution at service completions; 
therefore, for $rM/G/1$ queues, the continuous time stationary distribution 
and service measures based on it must be computed directly. 
Section \ref{s:stationarycont} begins with
the statement and recursive solution of the balance equation for the 
stationary distribution of the continuous time process $X$, which 
consists essentially of a sequence of linear ordinary differential
equations (ODE) where $f$ 
serves as an $r$ dependent coefficient.
Proposition \ref{p:proofstationary} proves that the solution of the balance equation is indeed the stationary measure of the process $X$ under the stability 
assumption $\rho < 1.$ The proof is based
on the PDP framework of \cite{davis1993markov}. A number of further computations based on the
continuous time stationary distribution is given in Section \ref{s:stationarycont}; 
in particular, Corollary \ref{c:emptysystemprob} gives
a simple formula for the stationary probability of an empty $rM/G/1$ system in continuous time
and Proposition \ref{p:meanformula} gives a formula for the stationary expected queue length in continuous time. Proposition \ref{p:arrivals} of 
Subsection  \ref{ss:averagear} gives the average arrival rate for the $rM/G/1$ system and 
finally \eqref{e:averagesojourntime} gives an explicit formula for the average 
sojourn time of a customer in an $rM/G/1$ system.
In general, $f$ may take the value $0$ and this may make $F$ noninvertible. For this reason, there is not, in general, a bijective correspondence between the 
continuous time stationary distribution of the $rM/G/1$ process $X$ and that 
of the $M/G/1$ process $\bar{X}.$ However, for $f > 0$ a bijective 
correspondence can be established; this is treated in Subsection \ref{ss:MG1c2}.

Section \ref{s:example} gives two examples showing the impact of reshaping 
the arrival process on the average waiting time. We
observe, as expected, that, for a given average arrival rate, 
the closer the customers arrive to the end of a service the shorter will be 
the average waiting time in the system.
Section \ref{s:conclusion} points out directions for future 
research.

\section{Dynamics of the process}\label{s:construct}
The theory of piecewise-deterministic Markov Processes (PDP) of \cite{davis1993markov} 
provides the ideal mathematical framework for the analysis of the $rM/G/1$ 
queue.
For the definition of the process we will use the PDP definition given in \cite[page 57]{davis1993markov},
which uses the following elements 
(all adopted from \cite{davis1993markov}):
the state space of the process will be
\[
E \doteq \bigcup_{k=0}^\infty E_k,
E_0 = B({\bm 0},\delta) \subset {\mathbb R}^2,~~
E_k \doteq \{k \} \times {\mathbb R}_+ = \{ (k,r), r  > 0 \},~~~
k\in \{ 1,2,3,...\},
\]
where ${\bm 0} =(0,0) \in {\mathbb R}^2$ denotes the origin of
${\mathbb R}^2$ and  $B({\bm 0}, \delta)$ 
denotes an open ball of radius $\delta < 1$;
${\bm 0}$ represents
the empty system
(in \cite{davis1993markov} the letter $\zeta$ denotes the
second component of  $x \in E$, we use $r$ for the same purpose).
The $rM/G/1$ process, $X_t=(N_t,R_t) \in E$,  $t \ge 0$,
will evolve,
on each $E_k$
smoothly following the vector field
$\mathfrak{X}_k : E_k \mapsto {\mathbb R}^2$
\[
\mathfrak{X}_k(x) \doteq \begin{cases} (0,-1),& ~~ k > 0, \\
					  0, & \text{otherwise,} 
\end{cases}
\] until it jumps.
Let us denote the jump times
of $X$ by the sequence $\{T_i, i=1,2,3,...\}.$
The vector field $\mathfrak{X}_k$ defines the following trivial flow
\begin{equation}\label{e:detdynamics}
\phi(t,(k,r))= (k,r - t),~~ k > 0,~~~~~~ \phi(t,{\bm 0} ) = {\bm 0};
\end{equation}
the process $X$ follows this flow in between its jumps:
\begin{equation}\label{e:detflow}
X_t = (N_{T_k}, \phi(t,X_{T_k}))=
 (N_{T_k}, R_{T_k} - (t-T_k)), T_k < t <  T_{k+1}.
\end{equation}
For $A \subset {\mathbb R}^2$, let $\partial A$ denote its boundary
in the Euclidian topology.
The exit boundary of the process is
\[
\Gamma^* \doteq \cup_{k=0}^\infty \partial E_k = \partial B({\bm 0},\delta) \cup \left( \cup_{k=1}^\infty \{ (k,0), k > 0 \}\right).
\]
For $x  = (k,r) \in E$, define (following \cite[page 57]{davis1993markov})
$t_*(x) \doteq  \inf \{t> 0, \phi_k(t,r) \in \partial E_k\}$
where we use the convention that the infimum of the empty set
is empty; $t_*(x)$ is the time when $X$ reaches $\Gamma^*$ if it doesn't
doesn't jump until this happens.
 By definition (\eqref{e:detdynamics} and \eqref{e:detflow})
$X$ moves with unit speed toward the $k$-axis 
on each $E_k$, $k > 0$, therefore,
\[
t_*(x) = r, x =(k,r) \in E, k > 0.
\]
For $k=0$, the process remains constant ${\bm 0}$
 until an arrival occurs, which implies
$ t_*({\bm 0})= \infty.$
Figure \ref{f:dynamics} shows an example sample path of $X$; 
the horizontal axis is the
$k$-axis, showing the number of customers in the system and
the vertical axis is the $r$-axis,
showing the remaining service time of the current customer in service.
The dynamics
\eqref{e:detflow} means that $X$ travels with unit speed toward the $k$-axis 
in between its jumps.
Two types of jumps are possible:
either an arrival, which are jumps to the right or a service completion, 
which are jumps to the left occurring when $X$ hits the $k$-axis.

The jump dynamics are specified by the rate function $
{\boldsymbol \lambda}:E\rightarrow
{\mathbb R}_+$ and the transition measure $Q$.
For the $rM/G/1$
system the jump rate function will be
\[
{\boldsymbol \lambda}(k,r) \doteq \begin{cases} \lambda_0 f(r), & k >0, \\
					\lambda_0, & k=0.
\end{cases}
\]
The transition measure $Q(\cdot,x)$, $x \in E \cup \Gamma^*$ for
the $rM/G/1$ system will be as follows:
$Q(\cdot,x)$ is the Dirac measure on $(k+1,r)$ for $x=(k,r)$,
$k > 0$ and $r > 0$ (represents an arrival to the busy system).
 For $(k,0) \in \Gamma^*$, $k > 0$,
$Q(\cdot,x)$ is the measure $g(r) dr$ on $E_{k-1}$ (represents
the completion of a service and the start of another, this is exactly when the sample
path $X$ hits the $k$-axis in Figure \ref{f:dynamics});
$Q(\cdot, {\bm 0})$ is the measure $g(r)dr$ on $E_1$ (represents
an arrival to the empty system).

\begin{figure}[h]
\begin{center}
\scalebox{0.8}{
\centerline{\input{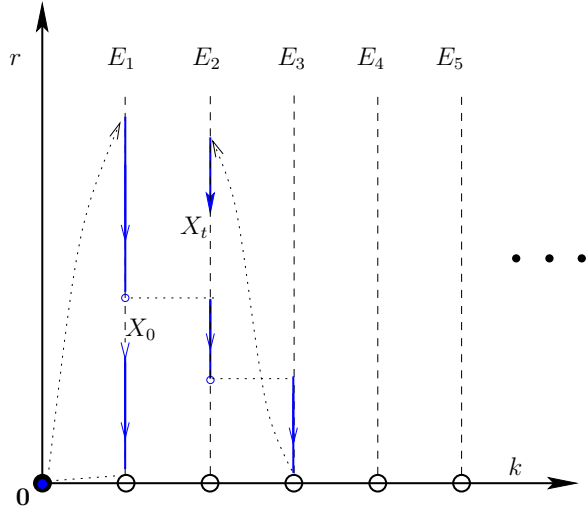}}}
\end{center}
\caption{\hspace{0.25cm}The state space and a sample path of $X$\label{f:dynamicstex}}
\end{figure}

\subsection{Generator of $X$}\label{ss:generator}
Let ${\mathscr E}$ denote  the $\sigma$-algebra of
Borel-measurable subsets of $E$.
Let $\{T_n, n=1,2,3,..\}$ denote the jump times of $X$. 
For 
$h:E \times {\mathbb R}_+ \times \Omega \mapsto {\mathbb R}$, $h$ measurable,
one writes $h \in L_1(X)$ if
\[{\mathbb E}\left[ \sum_{i=1}^\infty \left| h(X_{T_i}, T_i,\omega)\right| 
\right]
 < \infty
\]
and $h \in L_1^{\text{loc}}(X)$ if $h \indic_{\{ t< \sigma_n\}} \in L_1(X)$
for a sequence of stopping times $\sigma_n \nearrow \infty.$ 
The characterization of the generator of $X$ given in the
next paragraph uses these definitions.

The generator of any PDP process is derived explicitly in
\cite[Theorem (26.14), page 69]{davis1993markov}; for the 
$rM/G/1$ process $X$, it is given by the following operator:
\[
{\mathfrak A}h (x) =\begin{cases}
-\frac{d }{dr} h(x) +\lambda_0 f(r)
\left( h(k+1,r)  - h(k,r)\right), x =(k,r), k,r > 0,\\
+\lambda_0 g(r) (h(1,r) - h({\bm 0})), x = {\bm 0},
\end{cases}
\]
where 
$h \in {\mathscr D}({\mathfrak A})$, the domain of ${\mathfrak A}$,
consisting of measurable functions $h$ on $E \cup \Gamma^*$ satisfying:
\begin{enumerate}
\item for each $k > 0$, $h(k,\cdot)$ is absolutely continuous on 
${\mathbb R}_+$,
\item  $h(k,0) = \int h(k,r) g(r) dr, k > 0$ and 
\item 
${\mathfrak B} h \in L_1^{\text{loc}}(X)$ where
${\mathfrak B} h$ is the process $t \mapsto ( h(X_0) - h(X_t-)).$
\end{enumerate}

\subsection{Embedded random walk at service completion times}\label{ss:completion}
Let $S_k$ denote\footnote{In \cite{davis1993markov}, $S_k$ denotes the
inter-jump times of the PDP, here they denote the successive times when the process
hits the boundary of its state space.}
the sequence of service completion times
\[
S_1 \doteq \inf\{ t: X_{t-} \in \Gamma^*  \}, S_n \doteq \inf
 \{t > S_{n-1}, X_{t-} \in \Gamma^* \}, n > 1,
\]
and define the process
$({\mathcal N}_n, {\mathcal R}_n) \doteq X_{S_n}$,
the state of the system right after service completions. 
Let
\[
{\boldsymbol p}(k,\lambda) =  \frac{e^{-\lambda} \lambda^k}{k!}, k=0,1,2,3,...
\] denote the Poisson distribution with rate
$\lambda$
and define
\[
F(r) \doteq   \int_0^r f(r-u)du =  \int_0^r f(u)du.
\]
\begin{proposition}
The process $\{({\mathcal N}_n, {\mathcal R}_n),n=1,2,3,...\}$ 
is a Markov chain with transition probabilities
\begin{align}\label{e:CQR1}
P( {\mathcal N}_{n+1} - {\mathcal N}_{n} =j | {\mathcal N}_n,{\mathcal R}_n)
&= 
\begin{cases}
{\boldsymbol p}(j+1,F({\mathcal R}_n)),& {\mathcal N}_n > 0,\\
 {\boldsymbol p}(j, F({\mathcal R}_n)),& {\mathcal N}_n = 0
\end{cases}\\
P({\mathcal R}_{n+1} \in A | {\mathcal N}_{n+1}) &= 
\begin{cases}
\delta_{0}(A), & {\mathcal N}_{n+1} = 0, \\
\int_A g(r) dr, & {\mathcal N}_{n+1} > 0,
\end{cases}\label{e:distofR}
\end{align}
where $\delta_0$ denotes the Dirac measure on $0$.
\end{proposition}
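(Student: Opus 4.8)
The plan is to obtain both the Markov property and the explicit kernel from the strong Markov property of the underlying PDP $X$ together with a direct, path-by-path analysis of a single inter-completion interval $(S_n,S_{n+1})$. Since the $S_n$ are stopping times of $X$ and a PDP is a strong Markov process \cite{davis1993markov}, the conditional law of the trajectory after $S_n$, and in particular that of $X_{S_{n+1}}=(\mathcal{N}_{n+1},\mathcal{R}_{n+1})$, depends on the past only through $X_{S_n}=(\mathcal{N}_n,\mathcal{R}_n)$. This already shows that $\{(\mathcal{N}_n,\mathcal{R}_n)\}$ is a Markov chain, and it remains to identify its one-step transition kernel.

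First I would treat the busy case $\mathcal{N}_n>0$. Just after $S_n$ the state is $(\mathcal{N}_n,\mathcal{R}_n)$ with $\mathcal{R}_n$ the freshly started service time, and by the flow \eqref{e:detflow} the second coordinate descends at unit speed, $R_{S_n+s}=\mathcal{R}_n-s$, so no boundary is hit until $s=\mathcal{R}_n$; hence $S_{n+1}=S_n+\mathcal{R}_n$. Arrivals are right-jumps that leave $R$ unchanged, so along this deterministic descent they form an inhomogeneous Poisson process of intensity $\lambda_0 f(R_{S_n+s})=\lambda_0 f(\mathcal{R}_n-s)$; consequently the number of arrivals $A$ in $(S_n,S_{n+1})$ is Poisson with mean $\int_0^{\mathcal{R}_n}\lambda_0 f(\mathcal{R}_n-s)\,ds$, which the substitution $u=\mathcal{R}_n-s$ turns into $\lambda_0 F(\mathcal{R}_n)$, the Poisson parameter appearing in \eqref{e:CQR1}. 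The departure at $S_{n+1}$ contributes the shift by one, $\mathcal{N}_{n+1}=\mathcal{N}_n+A-1$, so $P(\mathcal{N}_{n+1}-\mathcal{N}_n=j\mid \mathcal{N}_n,\mathcal{R}_n)=P(A=j+1)=\boldsymbol{p}(j+1,\lambda_0 F(\mathcal{R}_n))$, which is the first line of \eqref{e:CQR1}.

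Next I would treat the empty case $\mathcal{N}_n=0$. Here the process rests at ${\bm 0}$ for an $\mathrm{Exp}(\lambda_0)$ idle time, during which $\mathcal{N}$ is unchanged, then jumps by $Q(\cdot,{\bm 0})$ to $(1,\sigma)$ with a fresh service time $\sigma\sim g$; repeating the descent argument over the interval of length $\sigma$ again produces $A\sim\mathrm{Poisson}(\lambda_0 F(\sigma))$ arrivals, and since the busy period starts with one customer and ends with one departure we get $\mathcal{N}_{n+1}=A$ with no shift, yielding the second line of \eqref{e:CQR1} with $F$ evaluated at the service time governing the interval. Finally, \eqref{e:distofR} is read directly off the transition measure $Q$ at the boundary: a completion from level $k>1$ moves to $E_{k-1}$ with a fresh remaining time distributed as $g$, whereas a completion from level $1$ empties the system and sets $\mathcal{R}_{n+1}=0$; the dependence only on $\mathcal{N}_{n+1}$ reflects that this fresh draw is independent of the size of the preceding jump.

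The main obstacle I anticipate is the rigorous justification that the arrival count over the descent is exactly Poisson with the integrated intensity, i.e.\ translating the PDP jump mechanism (inter-jump survivor function together with the transition measure $Q$) into the statement that the right-jumps occurring before the boundary hit form an inhomogeneous Poisson process, and in cleanly decoupling this count from the $r$-dynamics, which is possible precisely because arrivals do not alter $R$. A secondary care point is the empty-state bookkeeping: one must verify that the idle period and the subsequent fresh service are incorporated so that the shift by one disappears, and that the conditional independence underlying the product form of \eqref{e:CQR1} and \eqref{e:distofR} genuinely holds.
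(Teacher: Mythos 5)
Your proposal is correct and follows essentially the same route as the paper's proof: the Markov property via the strong Markov property of the PDP, the busy case via the unit-speed descent making the arrival count an inhomogeneous Poisson variable with mean $\lambda_0 F(\mathcal{R}_n)$ shifted by the departure, the empty case via the idle period plus a fresh service drawn from $g$, and \eqref{e:distofR} read directly from the transition measure $Q$. The only differences are cosmetic (you make the decomposition $\mathcal{N}_{n+1}=\mathcal{N}_n+A-1$ implicit rather than writing it as the paper's two-term sum, and you correctly include the factor $\lambda_0$ in the Poisson parameter).
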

\begin{proof}
The definition of the process $X$ (or its strong Markov property) 
implies that $({\mathcal N},{\mathcal R})$ is a
Markov chain.
The jump distribution $Q$ determines where $X$ jumps after it
hits $\Gamma^*$: $X$ jumps to $(N_{S_n-}-1, \sigma_n)$
where $\sigma_n$ has density $g$,  if $N_{S_n-} > 1$ or it jumps
to ${\bm 0} = (0,0)$ if $N_{S_n-} = 0.$
This gives \eqref{e:distofR}. 
To compute the conditional density of ${\mathcal N}_{n+1}$ given
$({\mathcal N}_n,{\mathcal R}_n)$ it suffices to compute that
of
\begin{equation}\label{e:decompose0}
{\mathcal N}_{n+1} -{\mathcal N}_n = 
( N_{S_{n+1}} - N_{S_{n+1}-})+ (N_{S_{n+1} - } - N_{S_n}).
\end{equation}
By the strong Markov property of $X$,
the conditional distribution of 
${N}_{S_{n+1}-} - {N}_{S_n}$ given $({\mathcal N}_n,{\mathcal R}_n)$
is the same as that of
$N_{S_1-} - N_0$ 
given $(N_0, R_0)$. The two cases $(N_0,R_0) = {\bm 0}$ and
$N_0, R_0 > 0$ are treated separately.
Let us start with the latter:
conditioned on $(X_0 = (N_0,R_0) = x= (k,r))$, $k > 0$, the dynamics of $X$
imply the following:  $S_1 = t_*(x) = r$,  and $R_t = r-t$ for
$t \in [0,r).$ In the same time interval the $N$ process is Poisson
with time dependent rate $\lambda_0 f(r-t).$ 
Therefore, conditioned on $X_0 = (k,r)$, $k > 0$,
$N_{S_1 - } - N_0$
has Poisson distribution with rate  $F(r)$.
Furthermore, for $k>0$, one has $N_{S_1-} > 0$ and therefore, once again
by the definition of the jump dynamics of $X$, $N_{S_1} - N_{S_1-} = -1$
(i.e., the customer whose service has just finished leaves the system).
These and \eqref{e:decompose0} imply
\begin{equation}\label{e:CQR1p}
P( {\mathcal N}_{n+1} - {\mathcal N}_{n} =j | {\mathcal N}_n,{\mathcal R}_n)
= {\boldsymbol p}(j+1,F({\mathcal R}_n)), {\mathcal N}_n > 0.
\end{equation}
The argument for the case $X_0= {\bm 0}$ is parallel and gives
\begin{equation}\label{e:CQR2}
P( {\mathcal N}_{n+1} - {\mathcal N}_{n} =j | 
({\mathcal N}_n,{\mathcal R}_n) = {\bm 0})
= {\boldsymbol p}(j,F(R_{\tau'_n}))
\end{equation}
where $\tau'_n$ is the first jump time of $R$ after $S_n$. 
For
$({\mathcal N}_n,{\mathcal R}_n) = {\bm 0}$, $\tau'_n$ will be a jump
from state ${\bm 0}$ (i.e., an arrival to the empty system) 
and by $X$'s definition 
$R_{\tau'_n}$'s density, given
the whole history of $({\mathcal N},{\mathcal R})$,  will again be $g$.
This, \eqref{e:CQR1p} and \eqref{e:CQR2} imply \eqref{e:CQR1}.
\end{proof}
The process 
${\mathcal N}$ itself is a Markov chain:
\begin{proposition} \label{p:MG1reduce}
${\mathcal N}$ is a Markov chain with transition matrix 
\begin{equation}\label{d:M}
M= \left(
\begin{matrix}
p(0) & p(1)    & p(2) & p(3) & p(4) & \cdots \\ 
p(0) & p(1)    & p(2) & p(3) & p(4) & \cdots \\ 
0      & p(0) & p(1) & p(2) & p(3)& \cdots \\
0      & 0      & \ddots & \ddots & \ddots & \ddots
\end{matrix}
\right),
\end{equation}
where
\begin{equation}\label{e:defpj}
p(j) \doteq \int_0^\infty {\boldsymbol p}(j, \lambda_0 F(r)) g(r) dr.
\end{equation}
\end{proposition}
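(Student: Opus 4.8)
The plan is to derive the law of $\mathcal{N}$ from the pair-transition probabilities of the preceding proposition by integrating out the remaining service time, and then to observe that the resulting probabilities depend on the past only through the current value of $\mathcal{N}$. The structural fact that makes this work is the following: the service time that governs the number of arrivals counted between the $n$-th and $(n+1)$-st service completions has density $g$ and is conditionally independent of $\mathcal{N}_0,\ldots,\mathcal{N}_n$ given $\mathcal{N}_n$. When $\mathcal{N}_n > 0$ this is exactly \eqref{e:distofR}, since $\mathcal{R}_n$ is a fresh service time of density $g$; when $\mathcal{N}_n = 0$ the relevant time is instead the service time $R_{\tau'_n}$ of the customer whose arrival restarts the system, which by the construction of $X$ also has density $g$ independently of the past.

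Next I would marginalize. Writing $k = \mathcal{N}_n$ and $m = \mathcal{N}_{n+1}$ and conditioning additionally on the relevant service time $r$, the preceding proposition (via \eqref{e:CQR1} and \eqref{e:CQR2}) gives a shifted Poisson law with mean $\lambda_0 F(r)$, namely $\boldsymbol{p}(m-k+1,\lambda_0 F(r))$ for $k>0$ and $\boldsymbol{p}(m,\lambda_0 F(r))$ for $k=0$. Integrating against the density $g$ of $r$ and invoking the conditional independence just noted yields, for any history with $\mathcal{N}_n = k$,
\[
P(\mathcal{N}_{n+1} = m \mid \mathcal{N}_0,\ldots,\mathcal{N}_n) =
\begin{cases}
p(m-k+1), & k > 0,\\
p(m), & k = 0,
\end{cases}
\]
with $p(\cdot)$ as in \eqref{e:defpj} and the convention $p(j)=0$ for $j<0$. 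Because both the integrand and the integrating measure depend on the past only through $k$, the right-hand side does too; this simultaneously proves the Markov property of $\mathcal{N}$ and identifies its one-step transition probabilities.

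Finally I would read off \eqref{d:M}. The row indexed by state $0$ is $(p(0),p(1),p(2),\ldots)$, and since $1-1=0$ the row indexed by state $1$ is the identical sequence; for $k \ge 2$ the entry $p(m-k+1)$ vanishes when $m < k-1$ and equals $p(0)$ at $m = k-1$, so each such row is the previous one shifted right by one position. This is precisely the matrix $M$. I expect the only delicate point to be the case $\mathcal{N}_n = 0$: one must resist setting $r = \mathcal{R}_n = 0$ (which would collapse $F(\mathcal{R}_n)$ to $0$ and give a nonsensical Poisson mass at $0$) and instead use the service time $R_{\tau'_n}$ of the restarting customer. Treating this case correctly is exactly what produces the first row of $M$ and explains why its first two rows coincide, mirroring the embedded chain of a classical $M/G/1$ queue.
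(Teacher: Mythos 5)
Your proof is correct and follows essentially the same route as the paper: condition on the pair $({\mathcal N}_n,{\mathcal R}_n)$, use the shifted-Poisson transition law from the preceding proposition, and integrate out the governing service time against $g$ via the tower property to obtain the row structure of $M$. Your explicit handling of the ${\mathcal N}_n=0$ case (using $R_{\tau'_n}$ rather than ${\mathcal R}_n=0$) is in fact slightly more careful than the paper's one-line appeal to \eqref{e:CQR1} and \eqref{e:distofR}, and correctly restores the factor $\lambda_0$ in the Poisson mean.
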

\begin{proof}
The conditional distributions \eqref{e:CQR1}, \eqref{e:distofR} 
and
the Markov property of
the process $\{({\mathcal N}_n, {\mathcal R}_n)\}$,
imply that
$\{ {\mathcal N}_n,n=0,1,2,3,..\}$ is a Markov chain;
the distribution of
its increments 
$\Delta {\mathcal N}_n = {\mathcal N}_{n+1} - {\mathcal N}_n$
are
\begin{equation}\label{e:conddistinc}
P( \Delta {\mathcal N}_{n} = j | {\mathcal N}_n)
= { \mathbb E}[{\mathbb E}
[( \indic_{{\mathcal N}_{n} = j} | {\mathcal N}_n, R_n)  ]
{\mathcal N}_n]
= \begin{cases} p(j+1), &{\mathcal N}_n > 0, \\
	 p(j), &{\mathcal N}_n =0,
\end{cases}
\end{equation}
This implies that $M$ of \eqref{d:M} is the transition
matrix of ${\mathcal N}$.
\end{proof}
We now note the first connection between $rM/G/1$ and $M/G/1$ systems. 
That $\{\sigma_i\}$ is an iid sequence implies the same for
$\bar{\sigma}_i\doteq F(\sigma_i).$
Then one can write \eqref{e:defpj} as
\[
p(j) = {\mathbb E}[{\boldsymbol p}(j,\lambda_0 \bar{\sigma}_1)],
j=0,1,2,3,,...
\]
and, by \cite[Proposition 3.3.2, page 57]{meyn2012markov}, these are
exactly the transition probabilities of the embedded random
walk (at service completion times) of an $M/G/1$ system with
constant rate $\lambda_0$ and iid service time sequence 
$\{\bar{\sigma}_i, i=1,2,3,..\}$:
\begin{corollary}\label{c:MG1eq1}
The dynamics
at service completion times
of the $rM/G/1$ system with arrival rate $\lambda_0 f(\cdot)$
and iid sequence of service times $\{\sigma_i,i=1,2,3,...\}$
is identical to the dynamics at
service completion times
of an $M/G/1$ system with constant arrival rate $\lambda_0$ and
iid sequence of service times $\{\bar{\sigma}_i = F(\sigma_i), i=1,2,3,...\}.$
\end{corollary}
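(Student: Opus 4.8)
The plan is to prove the corollary purely at the level of transition matrices: two time-homogeneous Markov chains on the same state space $\{0,1,2,\dots\}$ that share a one-step transition matrix have identical laws under any common initial distribution, so it suffices to show that the matrix $M$ of \eqref{d:M}, already identified in Proposition \ref{p:MG1reduce} as the transition matrix of the $rM/G/1$ embedded chain ${\mathcal N}$, is also the departure-epoch transition matrix of the stated $M/G/1$ system. As a preliminary I would note that $\{\bar{\sigma}_i = F(\sigma_i)\}$ is iid, which is immediate since $F$ is a fixed deterministic map and $\{\sigma_i\}$ is iid; this makes the comparison $M/G/1$ system well defined.

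The central step is to match the two families of per-service arrival-count probabilities. Reading off \eqref{e:conddistinc}, the increment of ${\mathcal N}$ over one service is $p(j+1)$ from a busy state and $p(j)$ from the empty state, where $p(j)$ of \eqref{e:defpj} is the probability that exactly $j$ arrivals of the modulated process occur during one service. During a service of length $\sigma$ the arrivals form a time-inhomogeneous Poisson process of rate $\lambda_0 f(\sigma-t)$, so their number is Poisson with parameter $\int_0^{\sigma}\lambda_0 f(\sigma-t)\,dt = \lambda_0\int_0^{\sigma} f(u)\,du = \lambda_0 F(\sigma)$; averaging over $\sigma$ gives $p(j) = {\mathbb E}[{\boldsymbol p}(j,\lambda_0\bar{\sigma}_1)]$. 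For the $M/G/1$ system with constant rate $\lambda_0$ and service time $\bar{\sigma}$, the number of arrivals during one service is Poisson with parameter $\lambda_0\bar{\sigma}$, so its per-service arrival-count probabilities are exactly these same ${\mathbb E}[{\boldsymbol p}(j,\lambda_0\bar{\sigma}_1)] = p(j)$.

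Finally I would invoke the classical characterization of the $M/G/1$ departure chain from \cite{meyn2012markov}: its transition matrix is skip-free to the left with its first two rows equal and is determined entirely by the per-service arrival-count probabilities, which places it in exactly the form of $M$ in \eqref{d:M} with entries $p(j)$. Equating the two matrices then yields the claim. I do not expect a genuine obstacle here; the only point demanding care is that the $rM/G/1$ rate is modulated by the remaining time $r = \sigma - t$ rather than elapsed time, so the substitution $u = \sigma - t$ in the intensity integral must be handled correctly. This is precisely the identity $\int_0^\sigma f(\sigma - t)\,dt = \int_0^\sigma f(u)\,du = F(\sigma)$ already encoded in the definition of $F$, and it is exactly what collapses the remaining-time dependence into an ordinary $M/G/1$ queue with reshaped service times $\bar{\sigma}_i = F(\sigma_i)$.
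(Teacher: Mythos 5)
Your proposal is correct and follows essentially the same route as the paper: both arguments note that $\{\bar{\sigma}_i = F(\sigma_i)\}$ is iid, rewrite $p(j)$ of \eqref{e:defpj} as ${\mathbb E}[{\boldsymbol p}(j,\lambda_0\bar{\sigma}_1)]$, and then identify the resulting matrix $M$ with the classical $M/G/1$ departure-epoch transition matrix via the cited result in \cite{meyn2012markov}. The extra detail you supply on the substitution $u=\sigma-t$ collapsing the remaining-time modulation into the parameter $\lambda_0 F(\sigma)$ is the same computation the paper carries out in the proof of the preceding proposition on the chain $({\mathcal N}_n,{\mathcal R}_n)$.
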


The next section computes the stationary distribution of ${\mathcal N}$ under
a natural stability assumption;  before
we move on, let us make the following observation:
\begin{remark}
{\em
Let $E_t$ denote the
elapsed service time since the beginning of current service.
If we replace
the arrival rate from $\lambda_0 f (R_t)$ with $\lambda_0 f(E_t)$,
conditioned on ${\mathcal R}_n = r$, the number of arrivals  between
the $n^{th}$ service completion and $(n+1)^{st}$ completion will be
a Poisson random variable with rate 
$\lambda_0 \int_0^r f(u)du$, i.e., the same as that of the $rM/G/1$
system; therefore,
the transition matrix $M$ of the embedded walk ${\mathcal N}$
remains unchanged if we replace the arrival rate $\lambda_0 f(R_t)$
with $\lambda_0 f(E_t).$
This implies that all of our
computations concerning ${\mathcal N}$
above and in Section \ref{s:servcomp} below 
remain unchanged if the arrival rate process is changed from
$\lambda_0 f (R_t)$ to $\lambda_0 f(E_t)$.
}
\end{remark}

\section{Stationary distribution at service completions or arrival times}\label{s:servcomp}

A measure $q$ is the stationary
measure of ${\mathcal N}$ if and only if it satisfies
\begin{equation}\label{e:balanceq}
q = qM.
\end{equation}
We have seen in Corollary \ref{c:MG1eq1}
that the dynamics of the $rM/G/1$ system at service
completion times is identical to that of the $M/G/1$ system
with constant arrival $\lambda_0$ and service times 
\{$\bar{\sigma}_i = F(\sigma_i)$\}, therefore
\eqref{e:balanceq} is also the balance equation  of this $M/G/1$ system
at its service completion times. The well known solution of this system
is (see, e.g., \cite[page 238]{meyn2012markov} 
or \cite[page 281]{asmussen2008applied})
\begin{equation}\label{e:explicitq}
q(j) = q(0) \bar{p}(j-1) + \sum_{i=1}^{j-1} q(i) \bar{p}(j-i), j=1,2,3,...
\end{equation}
where
$\bar{p}(j) \doteq  \sum_{i=j+1}^\infty p(j).$
In particular, a (possibly degenerate) invariant distribution
always exists and is uniquely defined as soon as $q(0)$ is fixed.
By definition $q$ is nondegenarate if and only if
$\sum_{i=1}^\infty q(i) < \infty$, i.e., if $q$ is a finite measure on
${\mathbb N}.$ 
\cite[Proposition 10.3.1, page 239]{meyn2012markov} gives precisely the condition
for this to hold:
\begin{proposition}\label{p:invariantmeasure}
$q$ of \eqref{e:explicitq} defines a finite measure if and only if
$-1 + \sum_{n}n p(n)< 0$, i.e., if
\begin{equation}\label{as:stable}
 \rho \doteq \lambda_0 \bar{\nu} = 
\lambda_0 {\mathbb E}[\bar{\sigma}_i]=
\lambda_0 {\mathbb E}[F(\sigma_i)]=
\lambda_0 \int_0^\infty F(r)g(r)dr< 1.
\end{equation}
\end{proposition}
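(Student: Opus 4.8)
The plan is to reduce the claim to a standard criterion for the positive recurrence (equivalently, the existence of a nondegenerate stationary distribution) of a random walk on $\mathbb{N}$ of $M/G/1$ type, and then to compute the relevant drift explicitly. By Corollary \ref{c:MG1eq1}, the chain $\mathcal{N}$ has the same dynamics as the embedded chain at service completions of an $M/G/1$ queue with arrival rate $\lambda_0$ and service times $\bar\sigma_i = F(\sigma_i)$. For such an $M/G/1$-type chain, the cited result \cite[Proposition 10.3.1, page 239]{meyn2012markov} states that the invariant measure $q$ of \eqref{e:explicitq} is finite precisely when the mean increment of the walk (away from $0$) is strictly negative, i.e. when $-1 + \sum_n n\, p(n) < 0$. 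So the first step is simply to invoke this dichotomy, observing that $\sum_n n\, p(n)$ is the expected number of arrivals during one service minus the single departure that the $-1$ represents.

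The substantive computation is then to evaluate $\sum_n n\, p(n)$ and identify it with $\lambda_0 \bar\nu$. Using the definition \eqref{e:defpj}, I would write
\[
\sum_{n=0}^\infty n\, p(n) = \sum_{n=0}^\infty n \int_0^\infty \boldsymbol{p}(n,\lambda_0 F(r)) g(r)\, dr
= \int_0^\infty \left( \sum_{n=0}^\infty n\, \boldsymbol{p}(n,\lambda_0 F(r)) \right) g(r)\, dr,
\]
where the interchange of sum and integral is justified by nonnegativity (Tonelli). The inner sum is just the mean of a Poisson random variable with parameter $\lambda_0 F(r)$, hence equals $\lambda_0 F(r)$. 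Substituting gives
\[
\sum_{n=0}^\infty n\, p(n) = \lambda_0 \int_0^\infty F(r) g(r)\, dr = \lambda_0 \bar\nu = \lambda_0 \mathbb{E}[F(\sigma_1)] = \lambda_0 \mathbb{E}[\bar\sigma_1],
\]
which is exactly $\rho$. Therefore $-1 + \sum_n n\, p(n) < 0$ is equivalent to $\rho = \lambda_0 \bar\nu < 1$, establishing the stated condition.

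The main obstacle, such as it is, is bookkeeping rather than conceptual: I must make sure the criterion from \cite{meyn2012markov} is applied to the correct increment law. The transition matrix $M$ in \eqref{d:M} is of skip-free-to-the-left $M/G/1$ type, and the relevant drift is the expected jump $\sum_j j\, P(\Delta\mathcal{N}_n = j \mid \mathcal{N}_n > 0)$ computed in \eqref{e:conddistinc}, namely $\sum_j j\, p(j+1) = \sum_n (n-1) p(n) = \sum_n n\, p(n) - 1$, since $\sum_n p(n) = 1$. This confirms that the quantity controlling recurrence is precisely $-1 + \sum_n n\, p(n)$, so the identification with $\rho - 1$ is clean. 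I would close by noting that the finiteness of $\mathbb{E}[F(\sigma_1)]$ is implicit in $\rho < 1$, so no separate integrability hypothesis is needed, and that the boundary case $\rho = 1$ yields a null-recurrent chain whose invariant measure is infinite, consistent with the strict inequality.
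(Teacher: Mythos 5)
Your proposal is correct and follows the same route as the paper, which likewise reduces the claim to the finiteness criterion of \cite[Proposition 10.3.1]{meyn2012markov} for the $M/G/1$-type chain and identifies $\sum_n n\,p(n)$ with $\lambda_0\bar\nu$ via the mixed-Poisson form of $p(j)$ in \eqref{e:defpj}. The Tonelli interchange and the drift bookkeeping you supply are exactly the details the paper leaves implicit.
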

Then, under the stability condition \eqref{as:stable},
$q(0)$ can be chosen so that $\sum_{i=0}^\infty q(i) = 1$.
and, with this choice, $q$ will be the unique stationary
measure of the process ${\mathcal N}.$ 
To determine the value of $q(0)$ for which $q$ is a proper probability 
measure, following \cite[page 239]{meyn2012markov}, one sums
both sides of \eqref{e:explicitq} to get
\[
\sum_{j=1}^\infty q(j) = q(0) \frac{\rho}{1-\rho};
\]
then, for $\sum_{i=0}^\infty q(i) = 1$ we must have
\[
q(0) = 1 -\rho.
\]
In the rest of this article, 
we will take $q(0) = 1-\rho$
whenever the stability assumption 
\eqref{as:stable} is made.
Under these assumptions
$q(0)$ is the stationary limit probability
of an empty $rM/G/1$ queue right after service completions:
\begin{proposition}
The distribution of ${\mathcal N}_n$ converges in total variation norm 
to $q$.
In particular,
\[
\lim_{n\rightarrow \infty} P({\mathcal N}_n = 0) =  q(0)= 1-\rho = 
1- \lambda {\mathbb E}[F(\sigma_1)]=1 -\lambda_0 \int_0^\infty F(r)g(r)dr.
\]
\end{proposition}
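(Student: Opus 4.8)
The plan is to deduce the total variation convergence from the classical ergodic theorem for discrete-time Markov chains (as in \cite[Chapter 13]{meyn2012markov}), applied to the chain ${\mathcal N}$ with transition matrix $M$ of \eqref{d:M}. That theorem guarantees $\|P({\mathcal N}_n \in \cdot) - q\|_{\mathrm{TV}} \to 0$ from any initial distribution once ${\mathcal N}$ is shown to be irreducible, aperiodic, and positive recurrent. I would verify these three properties in turn, reusing the reduction established in Corollary \ref{c:MG1eq1} that identifies ${\mathcal N}$ with the embedded walk of an $M/G/1$ queue.

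For irreducibility and aperiodicity I would first observe that $p(0) = \int_0^\infty e^{-\lambda_0 F(r)} g(r)\, dr > 0$, since the integrand is strictly positive ($F(r) < \infty$ for each finite $r$) and $g$ is a density. The structure of $M$ then gives a self-loop at state $0$ with probability $p(0) > 0$, so the communicating class of $0$ is aperiodic. Irreducibility on this class follows from two facts readable off \eqref{d:M}: from any state $k > 0$ one reaches $k-1$ with probability $p(0) > 0$, so state $0$ is reachable from every state in at most $k$ steps; and from $0$ one reaches any state $j$ in the support of $p(\cdot)$ in a single step. Restricting to the closed, irreducible communicating class of $0$, which is exactly the support of $q$, yields an irreducible aperiodic chain.

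Positive recurrence is then immediate from the work already done: Proposition \ref{p:invariantmeasure} shows that under the stability condition $\rho < 1$ the measure $q$ of \eqref{e:explicitq} is finite, and normalizing $q(0) = 1 - \rho$ makes it a stationary probability distribution. Since an irreducible Markov chain admitting a stationary probability distribution is positive recurrent with that distribution as its unique invariant law, invoking the convergence theorem yields total variation convergence of the law of ${\mathcal N}_n$ to $q$. The ``in particular'' statement follows by specializing to the set $\{0\}$: because total variation convergence dominates convergence of every coordinate, $P({\mathcal N}_n = 0) \to q(0) = 1 - \rho$, and the displayed rewritings of $1-\rho$ are just the identities for $\rho$ recorded in \eqref{as:stable}.

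The main obstacle I anticipate is not any single computation but the bookkeeping needed to state irreducibility correctly in the potentially degenerate cases where $p(j) = 0$ for large $j$ (for instance when $f$ vanishes on part of its domain), so that the chain may live on a proper subset of ${\mathbb N}$. Handling this cleanly requires restricting attention to the communicating class of $0$ and noting that it coincides with the support of $q$; once this is done the hypotheses of the ergodic theorem are met there, and the convergence statement together with its corollary for $P({\mathcal N}_n = 0)$ transfers to the full chain, since the remaining states are transient and carry no stationary mass.
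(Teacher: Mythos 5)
Your proposal is correct and follows essentially the same route as the paper: the paper likewise notes that ${\mathcal N}$ is (strongly) aperiodic, invokes \cite[Proposition 10.3.1]{meyn2012markov} for positivity under \eqref{as:stable}, and concludes total variation convergence from \cite[Theorem 13.3.1]{meyn2012markov}. You merely spell out the irreducibility and aperiodicity checks (via $p(0)>0$) that the paper leaves implicit.
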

\begin{proof}
That $q$ is the stationary distribution of ${\mathcal N}$ follows from
\eqref{e:balanceq}. ${\mathcal N}$ is strongly aperiodic; by
\cite[Proposition 10.3.1]{meyn2012markov} it is positive when
\eqref{as:stable} holds. The convergence in total variation
norm follows from these and \cite[Theorem 13.3.1]{meyn2012markov}.
\end{proof}

By Corollary \ref{c:MG1eq1}, all results/computations for
the $M/G/1$ queue at service completion times hold for 
the $rM/G/1$ queue. For example, the expected queue length
at service completion times, is given by the Pollaczek-Khinchine formula
\cite[Equation (5.3), page 281]{asmussen2008applied}
\begin{equation}\label{e:pk1}
{\mathbb E}_q[{\mathcal N}_1] = \sum_{k=1}^\infty k q(k)=
\rho + \frac{ \lambda_0^2{\mathbb E}[F(\sigma_1)^2]}{2(1-\rho)},
\end{equation}
where the subscript $q$ of ${\mathbb E}$ denotes that the Markov chain
${\mathcal N}$ is run in its stationary distribution,
and the moment generating function of the stationary distribution
is \cite[(5.8), page 283]{asmussen2008applied}:
\begin{equation}\label{e:mgf}
{\mathbb E}_q[e^{s{\mathcal N_1}}]
= 
\sum_{k=1}^\infty e^{s k} q(k) = \frac{(1-\rho)(1-s)\psi_p(s)}{\psi_p(s) -s} 
\end{equation}
where $\psi_p$ is the moment generating function of the
increments of ${\mathcal N}$:
\[
\psi_p(s) ={\mathbb E}\left[e^{\lambda_0(1-s) \bar{\sigma}_1} \right]
 ={\mathbb E}\left[e^{\lambda_0(1-s) F(\sigma_1)} \right].
\]
\paragraph{Stationary distribution at arrival times}
Let $S^A_n$ be the sequence of arrival times to the system.
Then $({\mathcal N}^A_n, {\mathcal R}^A_n) = (N_{S^A_n}-1, R_{S^A_n})$
is the embedded Markov chain of $X$ representing the state of the
system just before arrivals. The fact that the queueing process $X$
changes in increments of $1$ and $-1$ exactly at arrival and service completion
times imply that, under the stability assumption \eqref{as:stable},
the process ${\mathcal N}^A$ will also have stationary 
distribution $q$, the stationary distribution
of ${\mathcal N}.$ For details of similar arguments we refer the reader to
\cite[Theorem 4.3, page 278]{asmussen2008applied} or
\cite[Section 5.3]{kleinrock75}.
\section{Stationary distribution in continuous time}\label{s:stationarycont}
One of the key properties of $M/G/1$ systems is that their stationary
queue length distribution at service completion times is equal to the
same distribution under their continuous time stationary measure. We will
see in Corollary \ref{c:emptysystemprob} below that the $rM/G/1$ system
does not possess this property, hence the continuous time stationary
measure and related performance measures (such as the average waiting time)
for the $rM/G/1$ queue have to be computed separately. 
This is the goal of the present section.  The following verification argument 
will give us the stationary distribution of $X$:
\begin{enumerate}
\item Derive the balance equation for the stationary distribution,
\item Solve the balance equation,
\item Invoke \cite[Proposition (34.7), page 113]{davis1993markov} to show
that the solution is indeed the stationary measure of $X$ (see
Proposition \ref{p:proofstationary} below).
\end{enumerate}

For a measure $\mu$ on $E$ and $k \in \{1,2,3,...\}$,
we say that $\mu$ has density $m$ on $E_k$,
if $\mu(A \cap E_k) = \int_0^\infty 1_A((k,r)) m(r) dr$, for any
measurable $A \subset E.$ Define
\[
{\mathscr M} \doteq \{\mu: \text{ is a measure on $E$ having density
$m(k,\cdot)$ on }E_k,~k=1,2,3,...\}.
\]
The balance equation for the stationary distribution is
\begin{equation}\label{e:balance}
{\mathfrak A}^*(\mu)(x) = 0, x \in E.
\end{equation}
where
${\mathfrak A}^*$ 
is the conjugate 
operator (acting on measures $\mu \in {\mathscr M}$)
of the generator operator ${\mathfrak A}$:
\begin{equation}\label{e:defA}
{\mathfrak A}^*(\mu)(x) = 
\begin{cases}
\frac{d}{dr} m(k,r)  + \lambda_0 f(r) (m(k-1,r) - m(k,r)) + m(k+1,0)g(r), k > 1, r > 0\\
\frac{d}{dr} m(1,r) + \lambda_0 \mu({\bm 0}) g(r) + m(2,0) g(r) -  \lambda_0 f(r) m(1,r), r > 0\\
m(1,0) - \mu({\bm 0}) \lambda_0.
\end{cases}
\end{equation}
The goal of this section is to show that (up to scaling) there is a
unique solution $\mu^*$ to the balance equation \eqref{e:balance}
and this solution is the
stationary measure of the continuous time $rM/G/1$ process $X$.

Keep $\mu^*({\bm 0})  > 0$ as a free parameter to be
fixed below.
The third line of \eqref{e:defA} gives
\begin{align}
{\mathfrak A}^*(\mu^*)({\bm 0}) = 
\mu^*({\bm 0})
\lambda_0 -m^*(1,0) &= 0 \notag \\
m^*(1,0) &= \label{e:barm10}
\mu^*({\bm 0}) \lambda_0.
\end{align}
The last equality and the second line of \eqref{e:defA} imply
that \eqref{e:balance} reduces to
the following equation for $m(1,\cdot)$:
\begin{equation}\label{e:odel2}
\frac{d}{dr} m(1,r) + g(r)( {m}^*(1,0) + m^*(2,0))
 -  \lambda_0 f(r) m(1,r)   = 0 , r > 0.
\end{equation}
The classical linear ODE theory 
implies that the unique 
solution of \eqref{e:odel2} vanishing at $\infty$ is
\begin{equation}\label{e:barm1s}
m^*(1,r) =  
\left( m^*(1,0) + m^*(2,0)\right)
\int_{r}^{\infty} g(u)e^{(F(r)-F(u))\lambda_0} du,
\end{equation}
Substituting $r=0$ gives the following formula for $m^*(2,0)$:
\begin{align}
m^*(1,0) &= 
\left( m^*(1,0) + m^*(2,0)\right) p(0) \label{e:barm20i}
\intertext{ or }
m^*(2,0) &\doteq 
\frac{ 1-p(0)}{p(0)}
m^*(1,0)  > 0,
\label{e:barm20}
\end{align}
where, 
\[
p(0) =  \int_0^\infty g(r)e^{-F(r)\lambda_0} dr,
\]
 is the $0$ increment probability of the embedded
Markov chain ${\mathcal N}$, given in \eqref{e:defpj}.
That $m^*(2,0) > 0$ implies $m^*(1,\cdot) >  0.$
Next derive a second expression for $m^*(2,0)$
by integrating both sides of \eqref{e:barm1s} over $[0,\infty)$:
\begin{align*}
\int_0^\infty m^*(1,r)f(r)dr &=  
\left( m^*(1,0) + m^*(2,0)\right)
\int_0^\infty f(r)\int_r^{\infty} g(u)e^{(F(r)-F(u))\lambda_0} du dr\\
&=
\left( m^*(1,0) + m^*(2,0)\right)
\frac{1}{\lambda_0} ( 1 - p(0)),
\end{align*}
where we have used Fubini's theorem, $m(1,\cdot) > 0$
 and  the change of variable $s= F(r).$
The definition \eqref{e:barm20} of $m^*(2,0)$ implies
$m^*(1,0) = \frac{p(0)}{1-p(0)} m^*(2,0)$; substituting this
in the last line above gives
\begin{equation}\label{e:integralrepbm20}
m^*(2,0) = 
\lambda_0 \int_0^\infty m^*(1,r) f(r) dr
\end{equation}
Formulas \eqref{e:barm10}, \eqref{e:barm1s} and \eqref{e:barm20}
uniquely determine $m^*(1,\cdot)$ and $m^*(2,0)$
given $\mu^*({\bm 0}).$
For $k>1$,
\eqref{e:balance}
uses the first line of \eqref{e:defA}:
\begin{equation}\label{e:odepart3}
\frac{d}{dr} m(k,s)  
+ \lambda_0 f(r) (m(k-1,s) - m(k,s)) + m(k+1,0)g(r) =0, r>0.
\end{equation}
The unique solution of this linear equation for $k=2$ decaying
at $\infty$ is
\begin{equation}\label{e:barmk2}
m^*(2,r)=
m^*(3,0)\int_{r}^{\infty} g(u) e^{(F(r)-F(u))\lambda_0}du
+\lambda_0 \int_{r}^{\infty}e^{(F(r)-F(u))\lambda_0} m^*(1,u)f(u)du,
\end{equation}
where $m^*(3,0)$ is yet to be determined. To determine it
 set $r=0$ in the above display to get
\begin{equation}\label{e:barm30}
m^*(3,0) =
\frac{1}{p(0)}\left(
m^*(2,0) - \lambda_0\int_0^\infty e^{-F(u)\lambda_0} m^*(1,u)f(u)du\right);
\end{equation}
With this, $m^*(2,\cdot)$ and $m^*(3,0)$ are determined uniquely,
given $\mu({\bf 0})$. 
\eqref{e:integralrepbm20} and the definition of 
$m^*(3,0)$ imply $m^*(3,0) > 0$, which in
its turn implies $m^*(2,\cdot) > 0$.

Letting $r\rightarrow \infty$ in \eqref{e:barmk2} gives
$\lim_{r\rightarrow \infty}m^*(2,r) =0.$ This and the integration of 
\eqref{e:odepart3} on $[0,\infty)$ gives
\[
-m^*(2,0) + \lambda_0 \int_0^\infty m^*(1,r)f(r) dr
-\lambda_0 \int_0^\infty m^*(2,r) f(r) dr + m^*(3,0)  = 0.
\]
This and
\eqref{e:integralrepbm20} now imply a similar equation
for $m^*(3,0)$:
\begin{equation}\label{e:integralrepbm30}
m^*(3,0)  = \lambda_0 \int_0^\infty m^*(2,r)f(r) dr.
\end{equation}

For $k > 2$, one solves \eqref{e:odepart3}  inductively, using
$k=2$ as the base case to get the following sequence of 
unique positive solutions of \eqref{e:odepart3}
vanishing at $\infty$:
\begin{equation}\label{e:barmk2g0}
m^*(k+1,0) \doteq
\frac{1}{p(0)}\left(
 m^*(k,0) - \lambda_0\int_0^\infty e^{-F(u)\lambda_0} 
m^*(k-1,u)f(u)du\right)
\end{equation}
\begin{align}\label{e:barmk2g}
m^*(k,r) &\doteq
m^*(k+1,0)\int_{r}^{\infty} g(u) e^{(F(r)-F(u))\lambda_0}du\\
&~~~~~~~~~~+\lambda_0 \int_{r}^{\infty}e^{(F(r)-F(u))\lambda_0} m^*(k-1,u)f(u)du, 
\notag
\end{align}
$r>0$, and the solution satisfies
\begin{equation}\label{e:integralrepbmk0}
m^*(k+1,0)  = \lambda_0 \int_0^\infty f(r)m^*(k,r) dr.
\end{equation}
The last formulas are the extension of
\eqref{e:barmk2} and \eqref{e:barm30} 
 to $k>2$.
Let us note the foregoing computations as a proposition:
\begin{proposition}\label{p:muexistsunique}
Given $\mu^*({\bf 0}) > 0$, 
the balance equation \eqref{e:balance} has a unique positive solution
$\mu^*$
given by \eqref{e:barm10},  \eqref{e:barm1s}, \eqref{e:barm20}
for $k=1$ and \eqref{e:barmk2}, recursively, for $k\ge 2.$ The solution satisfies
\begin{equation}\label{e:integralrepbmk0}
m^*(k+1,0) = \lambda_0 \int_0^\infty m^*(k,r)f(r) dr
\end{equation}
for $k \ge 1$.
\end{proposition}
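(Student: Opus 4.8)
The plan is to solve the balance equation \eqref{e:balance} one queue-length level at a time, exploiting the near-triangular structure of the adjoint ${\mathfrak A}^*$ in \eqref{e:defA}: on each stratum $E_k$ the requirement ${\mathfrak A}^*(\mu)(x)=0$ is a single first-order linear ODE in $r$ for the density $m(k,\cdot)$, whose inhomogeneous term couples only to the neighbouring levels, with level $k+1$ entering solely through the boundary constant $m(k+1,0)$. First I would dispose of the atom: the third line of \eqref{e:defA} forces $m^*(1,0)=\lambda_0\mu^*({\bm 0})$, which is \eqref{e:barm10}, and this collapses the source of the $k=1$ equation into $(m^*(1,0)+m^*(2,0))g(r)$. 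I would then treat this $k=1$ equation as the prototype for every subsequent level.

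For the selection of a unique solution I would impose that each density vanish at $\infty$, as any stationary density must. The homogeneous equation $m'=\lambda_0 f(r)m$ has general solution $C\,e^{\lambda_0 F(r)}$, and since $F$ is non-decreasing this factor is bounded below by $1$ and never tends to $0$; hence the only homogeneous solution decaying at $\infty$ is the trivial one, which is what buys uniqueness. Multiplying by the integrating factor $e^{-\lambda_0 F(r)}$, integrating from $r$ to $\infty$, and using $m^*(1,\infty)=0$ produces the variation-of-parameters formula \eqref{e:barm1s}; the bound $e^{(F(r)-F(u))\lambda_0}\le 1$ for $u\ge r$ together with integrability of $g$ confirms that this candidate does decay, so it is the unique admissible solution. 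Evaluating \eqref{e:barm1s} at $r=0$ (where $F(0)=0$) and recalling the definition \eqref{e:defpj} of $p(0)$ gives $m^*(1,0)=(m^*(1,0)+m^*(2,0))p(0)$, which solves for $m^*(2,0)$ as in \eqref{e:barm20}; since $p(0)\in(0,1)$ this value is strictly positive, and positivity of the integrand in \eqref{e:barm1s} then yields $m^*(1,\cdot)>0$.

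The induction step is structurally identical. Assuming $m^*(j,\cdot)>0$ and the boundary constants $m^*(j,0)$ are determined for $j\le k$, the level-$k$ equation \eqref{e:odepart3} is again a first-order linear ODE with source $\lambda_0 f(r)m^*(k-1,r)+m^*(k+1,0)g(r)$ carrying the single unknown constant $m^*(k+1,0)$; the same integrating-factor computation gives \eqref{e:barmk2g}, and setting $r=0$ and solving produces the recursion \eqref{e:barmk2g0}. To keep positivity alive I would first establish the integral identity \eqref{e:integralrepbmk0} at the current level by letting $r\to\infty$ in \eqref{e:barmk2g} (both terms vanish by the decay argument above) and integrating \eqref{e:odepart3} over $[0,\infty)$, whereupon the boundary terms telescope to $m^*(k+1,0)=\lambda_0\int_0^\infty m^*(k,r)f(r)\,dr$. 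Combining this identity at level $k-1$ with the recursion \eqref{e:barmk2g0} rewrites the difference there as $\frac{\lambda_0}{p(0)}\int_0^\infty m^*(k-1,u)f(u)(1-e^{-\lambda_0 F(u)})\,du$, which is manifestly non-negative (and strictly positive under the standing non-degeneracy); this positivity of $m^*(k+1,0)$ then propagates to $m^*(k,\cdot)>0$ through \eqref{e:barmk2g}, closing the induction.

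The only genuinely delicate point, as opposed to routine ODE bookkeeping, is twofold: justifying that decay at $\infty$ is the correct admissibility criterion (so that uniqueness is legitimate), and verifying that the recursively generated boundary constants remain positive. Both become transparent once the integral representation \eqref{e:integralrepbmk0} is in hand, so I would prioritise deriving that identity at each level and feeding it back into the positivity argument, rather than attempting to bound the $m^*(k,\cdot)$ directly. I would not, within this statement, address summability of $\sum_k\int_0^\infty m^*(k,r)\,dr$, i.e.\ that $\mu^*$ is a finite measure; that requires the stability hypothesis $\rho<1$ and belongs to the separate verification step via \cite[Proposition (34.7)]{davis1993markov}, and is outside the scope of this existence-and-uniqueness proposition.
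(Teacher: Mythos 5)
Your proposal is correct and follows essentially the same route as the paper's derivation (which appears as the computations preceding the proposition): peel off the atom via the third line of \eqref{e:defA}, solve each level's first-order linear ODE by the integrating factor $e^{-\lambda_0 F(r)}$ with decay at $\infty$ as the selection criterion, determine $m^*(k+1,0)$ by evaluating at $r=0$, and obtain \eqref{e:integralrepbmk0} by integrating the ODE over $[0,\infty)$, feeding it back to propagate positivity. Your rewriting of the recursion as $m^*(k+1,0)=\frac{\lambda_0}{p(0)}\int_0^\infty m^*(k-1,u)f(u)\bigl(1-e^{-\lambda_0 F(u)}\bigr)du$ is exactly the paper's (implicit) positivity argument, made slightly more explicit.
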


The next proposition links the quantities $m^*(k,0)$
to the stationary distribution of the embedded chain ${\mathcal N}$:
\begin{proposition}\label{p:finitenessofmubar}
Let
 $\mu^*=(\mu^*({\bm 0}), m^*(k,\cdot), k=1,2,3,...)$ 
be the unique solution (up to the choice of $\mu^*({\bm 0}) > 0$)
of the balance equation \eqref{e:balance} derived in Proposition
\ref{p:muexistsunique} above. Then the measure 
\[
 m^*  \doteq (m^*(1,0), m^*(2,0), m^*(3,0),\cdots )
\]
 on ${\mathbb N}_+$
is $M$-invariant, i.e.,
\begin{equation}\label{e:eigenvector}
 m^* M = m^*
\end{equation}
and
\begin{equation}\label{e:equivalence}
m^* = c q
\end{equation}
for some $c > 0$ where $q$ is the stationary measure given in 
\eqref{e:explicitq}.
In particular,
\begin{equation}\label{e:summfinite}
\sum_{k=1}^\infty m^*(k,0) < \infty
\end{equation}
if the stability assumption \eqref{as:stable} holds.
\end{proposition}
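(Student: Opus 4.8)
The plan is to prove the three assertions in the order stated: first the eigenvector identity \eqref{e:eigenvector}, from which \eqref{e:equivalence} follows by a uniqueness argument, and then \eqref{e:summfinite} is immediate. Throughout I read the sequence $m^*=(m^*(1,0),m^*(2,0),\dots)$ as a row vector indexed from $0$ via $\ell\mapsto m^*(\ell+1,0)$, so that both $m^*M=m^*$ and $m^*=cq$ are understood coordinatewise (the $\ell$-th coordinate of $m^*$ being $m^*(\ell+1,0)$, that of $q$ being $q(\ell)$, so that \eqref{e:equivalence} reads $m^*(k,0)=c\,q(k-1)$). Since at this stage $\mu^*$ is only known to solve the balance equation and has not yet been identified as the stationary law of $X$ (that is Proposition \ref{p:proofstationary}), I avoid any ergodic or rate-conservation shortcut and verify \eqref{e:eigenvector} directly from the recursions of Proposition \ref{p:muexistsunique}.

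For \eqref{e:eigenvector} the key is to unfold the recursion \eqref{e:barmk2g}. Writing $b_k\doteq m^*(k,0)$, $G(r)\doteq\int_r^\infty g(u)e^{(F(r)-F(u))\lambda_0}\,du$, and letting $\mathcal K$ be the operator $(\mathcal K h)(r)\doteq\lambda_0\int_r^\infty e^{(F(r)-F(u))\lambda_0}h(u)f(u)\,du$, equations \eqref{e:barm1s} and \eqref{e:barmk2g} read $m^*(1,\cdot)=(b_1+b_2)G$ and $m^*(k,\cdot)=b_{k+1}G+\mathcal K m^*(k-1,\cdot)$ for $k\ge2$, so that
\[
m^*(k,\cdot)=\sum_{i=0}^{k-2}b_{k+1-i}\,\mathcal K^{i}G+(b_1+b_2)\,\mathcal K^{k-1}G .
\]
The crux is the closed form
\[
(\mathcal K^{i}G)(r)=\int_r^\infty g(w)\,{\boldsymbol p}\bigl(i,\lambda_0(F(w)-F(r))\bigr)\,dw,\qquad i\ge0,
\]
which I would prove by induction on $i$, the inductive step being a Tonelli interchange followed by the substitution $t=F(w)-F(u)$ in the inner integral. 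Feeding this into the flux relation \eqref{e:integralrepbmk0}, $b_{k+1}=\lambda_0\int_0^\infty m^*(k,r)f(r)\,dr$, and computing
\[
\lambda_0\int_0^\infty f(r)(\mathcal K^{i}G)(r)\,dr=\bar p(i)
\]
(again by interchange and the substitution $s=F(r)$, the resulting incomplete-gamma integral being the Poisson tail $\sum_{n>i}p(n)=\bar p(i)$) turns the flux relation into
\[
b_{k+1}=\sum_{i=0}^{k-2}b_{k+1-i}\,\bar p(i)+(b_1+b_2)\,\bar p(k-1),\qquad k\ge1 .
\]
A short generating-function check, using $\sum_{j\ge0}\bar p(j)z^{j}=(1-\hat p(z))/(1-z)$ with $\hat p(z)=\sum_j p(j)z^j$, shows this system is exactly the balance equation $m^*M=m^*$ for the matrix $M$ of \eqref{d:M}; this gives \eqref{e:eigenvector}.

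For \eqref{e:equivalence} I would argue by uniqueness. Because $p(0)=\int_0^\infty e^{-\lambda_0F(r)}g(r)\,dr>0$, the balance equation $wM=w$ can be solved forward: $w(0)$ determines $w(1)$ through the $\ell=0$ equation, and the $\ell$-th equation then determines $w(\ell+1)$ from $w(0),\dots,w(\ell)$. Hence any solution is fixed by its value at $0$, so $m^*$ and $q$ being solutions forces $m^*=c\,q$ with $c=m^*(1,0)/q(0)$; since $m^*(1,0)=\lambda_0\mu^*({\bm 0})>0$ by \eqref{e:barm10} and $q(0)>0$, we have $c>0$ (indeed $c=m^*(1,0)/(1-\rho)$ under \eqref{as:stable}). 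Finally \eqref{e:summfinite} is immediate: under \eqref{as:stable}, Proposition \ref{p:invariantmeasure} gives $\sum_\ell q(\ell)<\infty$, whence $\sum_k m^*(k,0)=c\sum_\ell q(\ell)<\infty$.

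I expect the main obstacle to be purely the bookkeeping in the two integral identities, and in particular justifying the change of variable $s=F(r)$ when $f$ is allowed to vanish: there $F$ is only nondecreasing, so the substitution must be read as a Stieltjes integral against $dF$, with the sets $\{f=0\}$ carrying no mass under $f\,dr$. The interchanges of integration are unproblematic since $f,g,m^*\ge0$ (Tonelli), and finiteness of all integrals is guaranteed because $\lambda_0\int_0^\infty f\,m^*(k,\cdot)=m^*(k+1,0)<\infty$ by construction.
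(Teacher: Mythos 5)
Your proof is correct, but it reaches \eqref{e:eigenvector} by a genuinely different route than the paper. The paper multiplies the balance ODEs \eqref{e:odel2} and \eqref{e:odepart3} by the Poisson weights $e^{-\lambda_0 F(r)}(\lambda_0 F(r))^{n}/n!$ and integrates by parts once, which produces the $n$-th coordinate of $m^*M=m^*$ directly (in the form $m^*(n+1,0)=p(n)(m^*(1,0)+m^*(2,0))+\sum_{k=2}^{n+1}m^*(k+1,0)p(n+1-k)$); you instead unfold the solution recursion into a Neumann series in the kernel $\mathcal K$, compute $\mathcal K^{i}G$ in closed form, and feed the result into the flux relation \eqref{e:integralrepbmk0}, which lands you on the telescoped, tail-probability form $w(k)=(w(0)+w(1))\bar p(k-1)+\sum_{\ell=2}^{k}w(\ell)\bar p(k-\ell)$ of the same system. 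Both are valid; the paper's computation is shorter (one integration by parts per equation, no need to relate two forms of the stationarity equation), while yours yields the explicit series representation of the densities $m^*(k,\cdot)$ as a byproduct, which is of independent interest. Your uniqueness argument for \eqref{e:equivalence} (forward-solving $wM=w$ using $p(0)>0$) is more elementary than the paper's appeal to irreducibility of $\mathcal N$, and matches the paper's own earlier remark that $q$ is determined by $q(0)$. One caveat: the ``short generating-function check'' is not the right tool for the equivalence of the two infinite systems, because it implicitly requires $\sum_{\ell}w(\ell)z^{\ell}$ to converge, and summability of $m^*$ is precisely the conclusion \eqref{e:summfinite} you have not yet established at that point. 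The equivalence should instead be argued by finite differencing: the sum of the first $N+1$ coordinate equations of $wM=w$ is algebraically identical to your $k=N+1$ relation, each such identity involving only finitely many terms, so the two families of equations determine each other by taking successive differences. With that substitution the argument is complete.
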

\begin{proof}
By definition \eqref{d:M} of the matrix $M$ 
\eqref{e:eigenvector} is the following sequence
of equations:
\begin{equation}\label{e:toprove0}
p(n)(m^*(1,0) + m^*(2,0)) + 
\sum_{k=2}^{n+1} m^*(k+1,0) p(n+1-k) = m^*(n+1,0),
\end{equation}
$n=0,1,2,3,...$
For $n=0$, \eqref{e:toprove0} reduces to \eqref{e:barm20i}, which holds by
definition.
To prove \eqref{e:toprove0} for $n>0$, 
multiply both sides of \eqref{e:odel2} by 
$e^{-F(r) \lambda_0 } \frac{(F(r)\lambda_0)^{n}}{n!}$, 
integrate from $0$ to $\infty$ to get
\begin{align}
0&=
\int_0^\infty \frac{d}{dr} m^*(1,r)
e^{- F(r)\lambda_0 } \frac{(F(r)\lambda_0)^{n}}{n!}dr
 + ( m^*(1,0)+ m^*(2,0))p(n) \notag \\
&~~~~~~~~~~~~-  \lambda_0 \int_0^\infty m^*(1,r) \notag
e^{-F(r) \lambda_0 } \frac{(F(r)\lambda_0)^{n}}{n!}f(r)dr.
\intertext{Integration by parts on the first integral gives:}
0&=(m^*(1,0) + m^*(2,0))p(n) 
-\lambda_0 \int_0^\infty  \label{e:casen}
e^{-F(r) \lambda_0 } \frac{(F(r)\lambda_0)^{n-1}}{(n-1)!}m^*(1,r)f(r)dr.
\end{align}
For $k =2,...,n+1$, multiply both sides of \eqref{e:odepart3} by
$e^{-F(r) \lambda_0 } \frac{(F(r)\lambda_0)^{n+1-k}}{(n+1-k)!}$
and integrate by parts the first term to get
\begin{align}\label{e:restofthek}
m^*(k+1,0) p(n+1-k) +&\lambda_0 \int_0^\infty 
e^{-F(r) \lambda_0 } \frac{(F(r)\lambda_0)^{n+1-k}}{(n+1-k)!}m^*(k-1,r)f(r)dr \\
&~~~~~~-\lambda_0 \int_0^\infty  \notag
e^{-F(r) \lambda_0 } \frac{(F(r)\lambda_0)^{n-k}}{(n+1-k)!}m^*(k,r)f(r)dr = 0.
\end{align}
Summing  the last display over $k$, adding to the result \eqref{e:casen}
and finally noting \eqref{e:barmk2g0}
give \eqref{e:toprove0} for $n > 0.$
The Markov chain ${\mathcal N}$ is a constrained random walk on ${\mathbb Z}_+$
with iid increments
and hence is obviously irreducible and will therefore have (up to scaling)
a unique stationary distribution; \eqref{e:equivalence} follows from
this. 
\eqref{e:summfinite} follows from \eqref{e:equivalence}
and Proposition \ref{p:invariantmeasure}.
\end{proof}

Define
\[
S(r) \doteq  \sum_{k=1}^\infty m^*(k,r),
\]
whose finiteness under the stability assumption follows
from 
\eqref{e:integralrepbmk0} and
the previous proposition (see \eqref{e:summfinite});
\eqref{e:integralrepbmk0} also implies
\begin{equation}\label{e:S00}
\lambda_0 \int_0^\infty S(r) f(r) dr = S(0) - m^*(1,0) = S(0) -\lambda_0\mu^*({\bm 0}).
\end{equation}
Remember that $\mu^*({\bm 0})$ is still a free parameter.
The next proposition computes $\int_0^\infty S(r)dr$ and $S(0)$
in terms of $\mu^*({\bm 0})$ and in terms of the system parameters.
\begin{proposition}\label{p:mu1starE}
Suppose the stability assumption \eqref{as:stable} holds.
Then
\begin{equation}\label{e:formulaforS0}
S(0) = \lambda_0 \frac{\mu^*({\bm 0})}{1-\rho}.
\end{equation}
and
\begin{equation}\label{e:formulaforintS}
\int_0^\infty S(r)dr= S(0) \nu.
\end{equation}
\end{proposition}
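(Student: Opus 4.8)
The plan is to reduce both identities to a single closed form for the aggregated density, namely
$S(r) = S(0)\,\bar G(r)$, where $\bar G(r) \doteq \int_r^\infty g(u)\,du$ is the service-time survival
function. The mechanism is that, when the balance equations are summed over all queue lengths, the gain
and loss terms carrying the factor $\lambda_0 f(r)$ cancel, leaving an elementary relation between $S$
and $g$ in which $f$ has disappeared entirely.

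To obtain this, rather than differentiate the series $S(r)=\sum_{k\ge1}m^*(k,r)$ termwise, I would first
integrate each balance ODE over $[0,r]$ and then sum: integrate \eqref{e:odel2} for $k=1$ and, for
$k\ge2$, integrate \eqref{e:odepart3}. Three observations drive the cancellation, with
$G(r)\doteq\int_0^r g(u)\,du=1-\bar G(r)$. First, the coefficients of $G(r)$ add up to
$\bigl(m^*(1,0)+m^*(2,0)\bigr)+\sum_{k\ge2}m^*(k+1,0)=\sum_{k\ge1}m^*(k,0)=S(0)$. Second, the gain drift
$+\lambda_0\int_0^r f\,m^*(k-1,\cdot)$ summed over $k\ge2$ equals $\lambda_0\int_0^r f(u)S(u)\,du$. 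Third,
the loss drift $-\lambda_0\int_0^r f\,m^*(k,\cdot)$ summed over $k\ge1$ equals
$-\lambda_0\int_0^r f(u)S(u)\,du$, so the second and third contributions cancel exactly. What remains is
$S(r)-S(0)+S(0)G(r)=0$, i.e. $S(r)=S(0)\bar G(r)$.

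Both displayed identities are then short computations. For \eqref{e:formulaforintS} I would integrate over
$[0,\infty)$ and apply Tonelli's theorem:
\[
\int_0^\infty S(r)\,dr = S(0)\int_0^\infty \bar G(r)\,dr
= S(0)\int_0^\infty\!\!\int_r^\infty g(u)\,du\,dr
= S(0)\int_0^\infty u\,g(u)\,du = S(0)\,\nu .
\]
For \eqref{e:formulaforS0} I would integrate $S(r)=S(0)\bar G(r)$ against $\lambda_0 f(r)$; Tonelli gives
$\int_0^\infty \bar G(r)f(r)\,dr=\int_0^\infty F(u)g(u)\,du=\bar\nu$, hence
$\lambda_0\int_0^\infty S(r)f(r)\,dr=\lambda_0\bar\nu\,S(0)=\rho\,S(0)$. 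Comparing with \eqref{e:S00}, which
reads $\lambda_0\int_0^\infty S(r)f(r)\,dr=S(0)-\lambda_0\mu^*({\bm 0})$, yields
$\rho\,S(0)=S(0)-\lambda_0\mu^*({\bm 0})$ and therefore $S(0)=\lambda_0\mu^*({\bm 0})/(1-\rho)$.
Alternatively, \eqref{e:formulaforS0} follows directly from the identification $m^*=cq$ in
Proposition \ref{p:finitenessofmubar}: summing gives $S(0)=c\sum_{j\ge0}q(j)=c$, while
$m^*(1,0)=cq(0)=c(1-\rho)$ together with \eqref{e:barm10} fixes $c=\lambda_0\mu^*({\bm 0})/(1-\rho)$.

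The one point requiring care — the main obstacle — is the legitimacy of interchanging the infinite sum
with the integration in the summation step, together with the convergence of the rearranged gain and loss
drifts. This is exactly where the stability hypothesis \eqref{as:stable} enters: by \eqref{e:summfinite}
we have $S(0)=\sum_{k\ge1}m^*(k,0)<\infty$, and $S(r)<\infty$ for every $r$; since $f$, $g$ and each
$m^*(k,\cdot)$ are nonnegative, Tonelli's theorem justifies every interchange, and the finiteness of
$\int_0^\infty f(u)S(u)\,du$ (guaranteed by \eqref{e:S00}) ensures that the gain and loss drifts may be
separated and cancelled without ambiguity.
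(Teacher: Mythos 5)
Your proof is correct and follows essentially the same route as the paper: sum the balance equations so the $\lambda_0 f$ gain and loss terms cancel, obtain $S(r)=S(0)\int_r^\infty g(u)\,du$, then integrate once against $dr$ for \eqref{e:formulaforintS} and once against $f(r)\,dr$, combined with \eqref{e:S00}, for \eqref{e:formulaforS0}. The only differences are cosmetic refinements (integrating the ODEs before summing rather than differentiating the series termwise, and Tonelli in place of integration by parts), which if anything make the interchange steps more careful than the paper's.
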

\begin{proof}
Summing the terms of the balance equation gives 
$S'(r) = -S(0)g(r)$, therefore,
\begin{equation}\label{e:Sr}
S(r) = S(0)G(r),
\end{equation}
where
\begin{equation}\label{d:Gr}
G(r) \doteq P(\sigma_1 > r) = \int_r^\infty g(u)du.
\end{equation}
Integrating
both sides of \eqref{e:Sr} over $[0,\infty)$ gives \eqref{e:formulaforintS}.
Next multiply both sides by $f(r)$ and integrate over $[0,\infty]$:
\[
\int_0^\infty S(r) f(r) dr = S(0) \int_0^\infty f(r) G(r)dr = S(0) \bar{\nu},
\]
where we have integrated by parts the middle integral.
The last display and \eqref{e:S00} imply
\begin{align*}
\lambda_0 S(0) \bar{\nu} &= S(0) -\lambda_0\mu^*({\bm 0})\\
S(0)  &= \lambda_0 \mu^*({\bm 0}) \frac{1}{1-\rho},
\end{align*}
which proves \eqref{e:formulaforS0}. 
\end{proof}
Let us fix the value for $\mu^*({\bm 0})$ to
\begin{equation}\label{e:setmu0}
\mu^*({\bm 0}) = 1-\rho = q(0);
\end{equation}
we will assume \eqref{e:setmu0} whenever the stability assumption \eqref{as:stable} is made.
This implies
by \eqref{e:formulaforS0} and \eqref{e:Sr}:
\begin{equation}\label{e:valueofS0}
S(0) = \lambda_0, S(r) =\lambda_0 G(r).
\end{equation}
A second implication is given in the next lemma.
\begin{lemma}
Let  $\mu^*({\bm 0})$ be fixed as in \eqref{e:setmu0}, i.e.,
we take $\mu^*({\bm 0}) = q(0)$. Then
\begin{equation}\label{e:exactmultiplier}
m^* = \lambda_0 q
\end{equation}
where $m^*=(m^*(1,0),m^*(2,0),...)$ is as in Proposition \ref{p:finitenessofmubar}. In particular,
\begin{equation}\label{e:expectedlength1}
\sum_{k=1}^\infty k m^*(k+1,0) = \lambda_0 \sum_{k=1}^\infty kq(k) = \lambda_0 {\mathbb E}_q[{\mathcal N}_1].
\end{equation}
\end{lemma}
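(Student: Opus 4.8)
The plan is to determine the multiplicative constant relating $m^*$ to $q$. Proposition \ref{p:finitenessofmubar} already supplies the qualitative identity $m^* = cq$ for some $c > 0$, i.e.\ componentwise $m^*(k,0) = c\,q(k-1)$ for $k \ge 1$; all that remains is to identify $c$ with $\lambda_0$, after which \eqref{e:expectedlength1} follows as an immediate corollary. So the entire content of the lemma is a normalization computation.

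First I would sum the boundary values of $m^*$ over $k$. Using the definition $S(r) = \sum_{k=1}^\infty m^*(k,r)$ evaluated at $r = 0$ together with $m^* = cq$, and recalling that under the stability assumption \eqref{as:stable} the measure $q$ is normalized to a probability distribution (so that $\sum_{j=0}^\infty q(j) = 1$), one obtains
\[
S(0) = \sum_{k=1}^\infty m^*(k,0) = c \sum_{k=1}^\infty q(k-1) = c \sum_{j=0}^\infty q(j) = c.
\]

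Next I would invoke the value of $S(0)$ already in hand. With the normalization $\mu^*({\bm 0}) = 1-\rho$ fixed in \eqref{e:setmu0}, Proposition \ref{p:mu1starE} (through \eqref{e:valueofS0}) gives $S(0) = \lambda_0$. Comparing this with the display above forces $c = \lambda_0$, which is exactly \eqref{e:exactmultiplier}. Substituting $m^*(k+1,0) = \lambda_0 q(k)$ and summing against $k$ then yields
\[
\sum_{k=1}^\infty k\, m^*(k+1,0) = \lambda_0 \sum_{k=1}^\infty k\, q(k) = \lambda_0 {\mathbb E}_q[{\mathcal N}_1],
\]
which is \eqref{e:expectedlength1}.

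There is no genuine obstacle in this argument, since the eigenvector relation $m^* = cq$ has already been established in Proposition \ref{p:finitenessofmubar} and the value $S(0) = \lambda_0$ in Proposition \ref{p:mu1starE}; the lemma merely glues them. The only point requiring care is the bookkeeping of the index shift between the boundary sequence $(m^*(1,0), m^*(2,0), \dots)$ and the distribution $(q(0), q(1), \dots)$, so that the telescoping sum $\sum_{k\ge 1} q(k-1) = \sum_{j\ge 0} q(j)$ is read off correctly, and the confirmation that $q$ is the normalized (probability) version of the invariant measure, which is guaranteed by the choice $q(0) = 1-\rho$ made under \eqref{as:stable}.
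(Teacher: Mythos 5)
Your proof is correct and follows essentially the same route as the paper: both identify the constant $c$ in $m^* = cq$ by summing the boundary values to get $c = \sum_{k\ge 1} m^*(k,0) = S(0)$ (using that $q$ is a probability measure) and then invoke $S(0)=\lambda_0$ from \eqref{e:valueofS0}. Your extra care with the index shift between $(m^*(k,0))_{k\ge1}$ and $(q(j))_{j\ge0}$ is consistent with the eigenvector relation \eqref{e:toprove0} and does not change the argument.
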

\begin{proof}
We know by \eqref{e:equivalence} that $m^* = cq$ for some $c > 0.$ 
Because
$q$ is a probability measure, this implies, 
$c = \sum_{k=1}^\infty m^*(k,0)=S(0)$, which equals $\lambda_0$,
by \eqref{e:valueofS0}. This proves \eqref{e:exactmultiplier};
\eqref{e:expectedlength1} follows from \eqref{e:exactmultiplier}.
\end{proof}

With $\mu^*({\bm 0})$ fixed as in \eqref{e:setmu0}, the measure
$\mu^*$ is determined uniquely via Proposition \ref{p:muexistsunique}.
Note
\begin{equation}\label{e:muE}
\mu^*(E) = \mu^*({\bm 0}) + \int_0^\infty S(r)dr
= 1- \rho + \lambda_0 \nu.
\end{equation}
where we have used \eqref{e:formulaforintS},
 \eqref{e:formulaforS0} and \eqref{e:setmu0}.
Thus, in general, with $\mu^*({\bm 0})$ fixed as in \eqref{e:setmu0},
$\mu^*(E) \neq 1$- to get a proper probability measure, renormalize 
$\mu^*$:
\[
\mu^*_1 \doteq \mu^*/\mu^*(E).
\]
 Proposition \ref{p:proofstationary} below proves that
$\mu^*_1$ is the unique stationary measure of the $rM/G/1$ process
$X$ under the stability assumption \eqref{as:stable}.
The proof will require a
subclass of functions in ${\mathscr D}({\mathfrak A})$
that can separate measures in ${\mathscr M}.$  The following
lemma identifies such a class.
\begin{lemma}\label{l:separating}
\[
{\mathscr S} \doteq \{h \in {\mathscr D}({\mathfrak A}), 
\sup_{x \in E}|h'(x)| < \infty, \sup_{x \in E}|h(x)| <  \infty \}
\]
is a separating class of functions for measures in ${\mathscr M}.$
\end{lemma}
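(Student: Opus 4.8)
The plan is to prove the kernel characterization of separation: $\mathscr{S}$ separates $\mathscr{M}$ precisely when the only signed measure $\nu = \mu_1 - \mu_2$ with $\mu_1,\mu_2 \in \mathscr{M}$ that annihilates every $h \in \mathscr{S}$ (meaning $\int h\, d\nu = 0$ for all $h \in \mathscr{S}$) is $\nu = 0$. Writing $\nu_0 \doteq \nu(\{{\bm 0}\})$ for the atom at the origin and $n_k \doteq m_1(k,\cdot) - m_2(k,\cdot)$ for the density of $\nu$ on $E_k$, the pairing decomposes as
\[
\int h\, d\nu = h({\bm 0})\,\nu_0 + \sum_{k=1}^\infty \int_0^\infty h(k,r)\, n_k(r)\, dr ,
\]
so the task reduces to showing that vanishing of this expression for all admissible $h$ forces $\nu_0 = 0$ and $n_k \equiv 0$ for every $k$. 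The entire difficulty is that membership in $\mathscr{D}({\mathfrak A})$ imposes the boundary constraint $h(k,0) = \int_0^\infty h(k,r) g(r)\,dr$, so one cannot simply use arbitrary test functions on each fibre $E_k$.

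First I would isolate a single fibre. Fix $k_0 \ge 1$ and, for a scalar function $\psi$ on ${\mathbb R}_+$, set $h(k_0,\cdot) = \psi$, $h(k,\cdot) \equiv 0$ for $k \ne k_0$, and $h({\bm 0}) = 0$. Such an $h$ lies in $\mathscr{S}$ as soon as $\psi$ is bounded and absolutely continuous with bounded derivative and satisfies the single scalar constraint $\psi(0) = \int_0^\infty \psi(r) g(r)\,dr$ (the conditions on the other fibres hold trivially, and the mild requirement ${\mathfrak B}h \in L_1^{\mathrm{loc}}(X)$ holds because $h$ is bounded). For this choice the pairing collapses to $\int_0^\infty \psi(r)\, n_{k_0}(r)\,dr = 0$, so it suffices to prove that the admissible $\psi$ separate $L^1$ densities on the fibre. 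I expect this to be the crux of the argument, and the place where the boundary condition genuinely bites.

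To carry this out I would split the admissible $\psi$ into two subfamilies. For $\psi \in C_c^\infty((0,\infty))$ one has $\psi(0) = 0$, so the constraint reduces to $\int_0^\infty \psi g\,dr = 0$; running over all such $\psi$ and invoking the standard duality argument (fix $\psi_0 \in C_c^\infty((0,\infty))$ with $\int \psi_0 g = 1$ and replace $\psi$ by $\psi - (\int \psi g)\psi_0$) forces $n_{k_0} = C_{k_0}\, g$ almost everywhere for some constant $C_{k_0}$. This is exactly the obstruction created by the boundary condition: it prevents me from concluding $n_{k_0}\equiv 0$ directly and leaves the multiple of $g$ undetermined. To remove it I then use the constant function $\psi \equiv 1$, which is admissible since $\psi(0) = 1 = \int_0^\infty g\,dr$; the pairing now gives $\int_0^\infty n_{k_0}(r)\,dr = 0$, and combined with $n_{k_0} = C_{k_0} g$ and $\int_0^\infty g = 1$ this yields $C_{k_0} = 0$, hence $n_{k_0}\equiv 0$. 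Since $k_0$ was arbitrary, every fibre density vanishes.

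Finally I would separate the atom at the origin. The function $h = \indic_{\{{\bm 0}\}}$, equal to $1$ at ${\bm 0}$ and $0$ on every $E_k$, lies in $\mathscr{S}$ (on each fibre the boundary condition reads $0 = \int_0^\infty 0\cdot g\,dr$, and nothing ties $h({\bm 0})$ to the fibre values), and testing against it gives $\int h\,d\nu = \nu_0 = 0$. Together with the previous step this gives $\nu = 0$, proving that $\mathscr{S}$ is separating. Throughout, the pairings are finite because each test function is supported on a single fibre (or on the atom) and the measures in $\mathscr{M}$ of interest are finite; the only delicate point, as indicated, is that the domain's boundary condition forces the two-stage argument, first through $C_c^\infty((0,\infty))$ functions and then through the constant $\psi \equiv 1$.
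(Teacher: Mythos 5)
Your duality framework (reduce to showing that the only signed measure annihilating all of ${\mathscr S}$ is zero) is a legitimate alternative to the paper's route, and the first stage of your fibre argument is fine. The gap is in the admissibility of your test functions, and it traces back to the boundary condition defining ${\mathscr D}({\mathfrak A})$. The jump kernel $Q$ sends $(k,0)\in\Gamma^*$ to $E_{k-1}$ with density $g$, so Davis's boundary condition $h(x)=\int h(y)\,Q(dy;x)$ reads $h(k,0)=\int_0^\infty h(k-1,r)\,g(r)\,dr$ (and $h(1,0)=h({\bm 0})$); this cross-fibre form is exactly what the paper's own proof of the lemma imposes on its $h_n$. (The same-fibre version $h(k,0)=\int h(k,r)g(r)dr$ printed in Subsection \ref{ss:generator}, which you took at face value, is inconsistent with $Q$ and appears to be a typo.) Under the correct condition, a function supported on the single fibre $E_{k_0}$ must satisfy not only $\psi(0)=0$ but also $\int_0^\infty\psi(r)g(r)\,dr=h(k_0+1,0)=0$. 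Consequently your second-stage test function $\psi\equiv 1$ on $E_{k_0}$ is \emph{not} in ${\mathscr D}({\mathfrak A})$, and you are left unable to rule out $n_{k_0}=C_{k_0}\,g$ with $C_{k_0}\neq 0$. The same problem kills $h=\indic_{\{{\bm 0}\}}$: it would force $h(1,0)=h({\bm 0})=1$ while $h(1,\cdot)\equiv 0$ forces $h(1,0)=0$.

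The missing idea is precisely how to escape the constraint $\int\psi g=0$ without leaving the domain: any admissible $h$ with $\int_0^\infty h(k_0,r)g(r)\,dr\neq 0$ must be nonzero on $E_{k_0+1}$, hence (by the same condition one level up) potentially on all higher fibres. The paper resolves this by propagating the boundary values recursively, $h_n(j,0)=\int_0^\infty h_n(j-1,r)g(r)\,dr$ for $j>k_0$, while choosing the extension on each higher fibre to be a bump concentrating at $r=0$ as $n\to\infty$; since measures in ${\mathscr M}$ have densities on each $E_k$, the contributions from the fibres $j>k_0$ vanish in the limit and one recovers $\int_a^b m(k_0,r)\,dr$. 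To repair your argument you would need an analogous device, e.g.\ extending your $\psi\equiv 1$ (or any $\psi$ with $\int\psi g\neq 0$) to the higher fibres by the recursion and then controlling, or cancelling, the resulting extra terms $\sum_{j>k_0}\int h(j,r)n_j(r)\,dr$ in the pairing; as written, the proof does not close.
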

\begin{proof}
For $\mu_1, \mu_2 \in {\mathscr M}$, $\mu_1 = \mu_2$ if and only if
\[
\int_a^b m_{1,k}(r)dr = \int_a^b m_{2,k}(r)dr
\]
for all $0 < a < b < \infty$ and $k > 0$ ($m_{1,k}$ and $m_{2,k}$
are densities of $\mu_1$ and $\mu_2$ on $E_k$).
Define the standard mollifier
\[
\eta(x) \doteq \begin{cases} C_\eta e^{\frac{1}{|x|^1 -1}}, & |x| < 1 \\
				 0                       , & |x| > 1,
\end{cases}
\]
where $C_\eta > 0$ is such that $\int_{-1}^1 \eta(x) dx = 1$.
For any interval $(a,b)$, $ 0 < a < b$
define 
$h_n:E \rightarrow [0,1]$, $1/m  < 1/2a$ as follows: 
for $x=(j,r) \in E$, $j < k$
$h_n(x) = 0.$ For $x = (k,r)$
\begin{equation}\label{d:hm}
h_n(x) = n\int_{-1}^1 \eta(u/n)\indic_{(a,b)}(u+r)dr, j=k.
\end{equation}
For $j> k$ we proceed recursively:
\begin{equation}\label{e:recdefhm}
h_n(j,0) = \int_0^\infty h_n(j-1,r) g(r)dr,~~~  h_n(j,r) = h_n(j,0) n\int_{nr-1}^1 \eta(x/n)dx,
\end{equation}
where we write $h_n(j,0)$ instead of $h_n( (j,0))$ to simplify
notation.
By its definition, $h_n \in {\mathscr S}$ and 
$\lim_{n \rightarrow  \infty} h_n(k,r) =  \indic_{\{(k,r),r \in (a,b))\}}$
almost surely for any measure $\mu \in {\mathscr M}$, this and the
bounded convergence theorem imply
\[
\lim_{n\rightarrow \infty} \int_E h_n(x) \mu(dx) = \int_a^b m(k,r)dr.
\]
This proves that functions of the form $h_n$ and (therefore the class ${\mathscr S}$ containing them) 
is a separating class for measures in ${\mathscr M}$. 

It remains to show that $h_n \in {\mathscr D}({\mathfrak A})$. 
The following three conditions
for this are listed in Subsection \ref{ss:generator}:
1) $h_n$ must be absolutely
continuous, this follows from its definition \eqref{d:hm} and
\eqref{e:recdefhm}
2) $h_n$ must satisfy 
$h_n((j,0)) = \int_0^\infty h_n(j-1,r) g(r)dr$; this again
holds by definition and 3) ${\mathfrak B}h_n \in L_1^{loc}(X)$; this
follows from the fact that $h_n$ is bounded.
\end{proof}

\begin{proposition}\label{p:proofstationary}
 If the stability assumption  \eqref{as:stable} holds then
$\mu^*_1$
is the unique stationary measure of the process $X$. In particular
if $X_0$ has distribution $\mu^*_1$, $X_t$ has the same distribution
for all $t > 0.$
\end{proposition}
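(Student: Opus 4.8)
The plan is to carry out step (3) of the verification scheme announced at the start of this section: show that $\mu^*_1$ satisfies the weak (adjoint) stationarity identity $\int_E {\mathfrak A}h\,d\mu^*_1 = 0$ for every $h\in{\mathscr D}({\mathfrak A})$, and then invoke the invariance characterization of \cite[Proposition (34.7), page 113]{davis1993markov}. First I would check that $\mu^*_1$ is a genuine probability measure: under \eqref{as:stable} the total mass $\mu^*(E)=1-\rho+\lambda_0\nu$ recorded in \eqref{e:muE} is finite and strictly positive, so $\mu^*_1=\mu^*/\mu^*(E)$ is well defined and inherits positivity from Proposition \ref{p:muexistsunique}.

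The core of the argument is the identity $\int_E {\mathfrak A}h\,d\mu^*=0$; dividing by $\mu^*(E)$ then transfers it to $\mu^*_1$. Expanding the integral as the ${\bm 0}$-contribution $\lambda_0\mu^*({\bm 0})\int_0^\infty\bigl(h(1,r)-h({\bm 0})\bigr)g(r)\,dr$ plus $\sum_{k\ge 1}\int_0^\infty\bigl[-\tfrac{d}{dr}h(k,r)+\lambda_0 f(r)(h(k+1,r)-h(k,r))\bigr]m^*(k,r)\,dr$, I would integrate the derivative term by parts in $r$. This converts $-h'(k,r)m^*(k,r)$ into $h(k,r)\tfrac{d}{dr}m^*(k,r)$ plus a boundary contribution $h(k,0)m^*(k,0)$ at $r=0$ and a term at $r=\infty$ that vanishes because each $m^*(k,\cdot)$ was built to decay at infinity in \eqref{e:barm1s} and \eqref{e:barmk2g} while $h$ is bounded. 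Using the domain condition of Subsection \ref{ss:generator} (which relates $h(k,0)$ to the post-service-completion average against $g$, exactly as in the recursive definition of $h_n$ in Lemma \ref{l:separating}), together with the boundary balance relations $m^*(1,0)=\lambda_0\mu^*({\bm 0})$ of \eqref{e:barm10} and $m^*(k+1,0)=\lambda_0\int_0^\infty f(r)m^*(k,r)\,dr$ of \eqref{e:integralrepbmk0}, these $r=0$ terms regroup, after an index shift from level $k$ to level $k+1$, into precisely the jump-in terms $m^*(k+1,0)g(r)$ appearing in ${\mathfrak A}^*$. The coefficient of each $h(k,r)$ then collects into the first and second lines of \eqref{e:defA}, and the coefficient of $h({\bm 0})$ into the third line, so the whole expression equals $\int_E h\,d\bigl({\mathfrak A}^*\mu^*\bigr)=0$ by the balance equation \eqref{e:balance}. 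To interchange summation and integration I would invoke Fubini, justified by the finiteness $\sum_k m^*(k,0)<\infty$ from Proposition \ref{p:finitenessofmubar} together with the integrability built into ${\mathscr D}({\mathfrak A})$; for the bounded functions of the class ${\mathscr S}$ of Lemma \ref{l:separating} this is immediate.

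With the weak identity established, \cite[Proposition (34.7)]{davis1993markov} yields that $\mu^*_1$ is invariant, so $X_t\sim\mu^*_1$ for every $t>0$ when $X_0\sim\mu^*_1$; the integrability hypotheses of that proposition are supplied by the ${\mathfrak B}h\in L_1^{\text{loc}}(X)$ requirement already built into ${\mathscr D}({\mathfrak A})$. For uniqueness, let $\nu\in{\mathscr M}$ be any stationary probability measure; the necessity direction of the same characterization gives $\int_E{\mathfrak A}h\,d\nu=0$ for all $h\in{\mathscr S}$. Feeding in the mollified, localized test functions $h_n$ of Lemma \ref{l:separating} and letting $n\to\infty$ reconstructs the pointwise balance equation \eqref{e:balance} for $\nu$; since ${\mathscr S}$ is separating this recovers $\nu$ completely as a solution of \eqref{e:balance}. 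By the uniqueness statement of Proposition \ref{p:muexistsunique} this forces $\nu=c\,\mu^*$ for some $c>0$, and normalization gives $c=1/\mu^*(E)$, i.e. $\nu=\mu^*_1$.

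The step I expect to be the main obstacle is the boundary bookkeeping in the integration by parts: ensuring that the $r=0$ term produced at level $k$ matches, after the index shift, the service-completion inflow $m^*(k+1,0)g(r)$ into level $k$, and that the level-one term together with the ${\bm 0}$-contribution cancel correctly via \eqref{e:barm10}. This requires using all of the constructed boundary identities \eqref{e:barm10}--\eqref{e:integralrepbmk0} simultaneously while carefully tracking the shift between adjacent queue lengths. A secondary technical point is confirming that the hypotheses of \cite[Proposition (34.7)]{davis1993markov}, in particular the admissibility of the test functions and the requisite integrability of ${\mathfrak A}h$ against $\mu^*_1$, are genuinely met; restricting to the bounded, compactly localized functions of ${\mathscr S}$ is what makes these conditions transparent.
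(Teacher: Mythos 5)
Your proposal is correct and follows essentially the same route as the paper: verify $\int_E {\mathfrak A}h\,d\mu^* = 0$ for $h$ in the separating class ${\mathscr S}$ of Lemma \ref{l:separating} via integration by parts, the domain condition $h(k,0)=\int h(k-1,r)g(r)\,dr$, and the balance equation \eqref{e:balance}, then invoke \cite[Proposition (34.7)]{davis1993markov}. The only cosmetic difference is that the paper handles the infinite sum by truncating at level $N$ and showing the single leftover boundary term $\lambda_0\int m^*(N,r)f(r)h(N+1,r)\,dr$ vanishes as $N\to\infty$, whereas you invoke Fubini directly; both are justified by the summability from Proposition \ref{p:finitenessofmubar}.
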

\begin{proof}
The uniqueness follows from the uniqueness claim of Proposition 
\ref{p:muexistsunique}.
By \cite[Proposition (34.7), page 113]{davis1993markov}, 
$\mu^*_1$ is the stationary distribution of $X$ if
\begin{equation}\label{e:zeroaction}
\int {\mathfrak A}h(x) \mu^*_1(dx) = 0;
\end{equation}
for a class of functions $h \in {\mathscr D}({\mathfrak A})$ that forms
a separating class for  measures in ${\mathscr M}$ to which
$\mu_1^*$ belongs; 
by Lemma \ref{l:separating}
${\mathscr S} \subset{\mathscr D}({\mathfrak A})$
is such a class. Thus, to prove the proposition it suffices to prove
\eqref{e:zeroaction} for $h \in {\mathscr S}.$
By definition,
\[
\int {\mathfrak A}h(x) \mu^*_1(dx) = 
\frac{1}{\mu^*(E)} \int {\mathfrak A}h(x) \mu^*(dx),
\]
and one can directly work with the measure $\mu^*$ rather than
the normalized $\mu_1^*.$
For any $h \in {\mathscr S}$
\begin{equation}\label{e:limitproof}
\int_E {\mathfrak A}h(x) \mu^*(dx) = 
\lim_{N\rightarrow \infty} \sum_{k=1}^N 
\int_0^\infty \left(-\frac{dh}{dr}(k,r) + \lambda_0f(r)(h(k+1,r)-h(k,r))
\right)m^*(k,r)dr
\end{equation}
We begin by an integration by parts:
\begin{align*}
&~~\sum_{k=1}^N \int_0^\infty\left(-\frac{dh}{dr}(k,r) + \lambda_0f(r)(h(k+1,r)-h(k,r))
\right)m^*(k,r)dr\\
&~~~=
\sum_{k=1}^N\int_0^\infty
\left( \frac{dm^*}{dr}(k,r) - h(k,0)m^*(k,0) + \lambda_0f(r)(h(k+1,r)-h(k,r))
\right)m^*(k,r)dr\\
\intertext{$h(k,0) = \int_0^\infty g(r) h(k-1,r)dr$  
because $h \in {\mathscr D}({\mathfrak A})$, therefore,}
&~~~=
\sum_{k=1}^N\int_0^\infty
\left( \frac{dm^*}{dr}(k,r) - g(r)h(k-1,r)m^*(k,0) + \lambda_0f(r)(h(k+1,r)-h(k,r))
\right)m^*(k,r)dr\\
\intertext{Rearrange the terms in the sum to factor out the common $h(k,r)$:}
&~~~=
\sum_{k=1}^N\int_0^\infty
\left( \frac{dm^*}{dr}(k,r) - g(r)m^*(k+1,0) + 
\lambda_0f(r)(m^*(k-1,r)-m^*(k,r))\right)h(k,r)dr\\
&~~~~~~~~~~~~~~~~~+ \int_0^\infty m^*(N,r) f(r) \lambda_0 h(N+1,r) dr\\
\intertext{Now ${\mathfrak A}^* \mu^* = 0$ implies}
&~~~= \int m^*(N,r) f(r) \lambda_0 h(N+1,r) dr;
\end{align*}
the last integral goes to $0$ with $N$ because 
$\int m^*(N,r)f(r)dr \rightarrow 0$ and $h$ is bounded. Therefore,
the limit on the right side of \eqref{e:limitproof} is $0$.
This proves \eqref{e:zeroaction} and establishes that $\mu^*$ is the unique stationary distribution of
the process $X$.
\end{proof}

The last proposition and Proposition \ref{p:proofstationary} give
\begin{corollary}\label{c:emptysystemprob}
The stationary probability of an empty system in continuous time for a stable $rM/G/1$ queue is
\[
\mu^*_1({\bm 0}) = \frac{\mu^*({\bm 0})}{\mu^*(E)} = \frac{1-\rho}{1-\rho + \lambda_0 \nu}.
\]
\end{corollary}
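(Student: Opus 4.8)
The plan is to assemble ingredients that are already in place rather than to derive anything new. By Proposition \ref{p:proofstationary}, under the stability assumption \eqref{as:stable} the renormalized measure $\mu^*_1 = \mu^*/\mu^*(E)$ is the unique stationary measure of the continuous-time process $X$; hence $\mu^*_1({\bm 0})$ is, by definition, the stationary probability that the system is empty in continuous time, and this is precisely the quantity the corollary evaluates. So the task reduces to reading off a ratio.

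For the first equality I would simply invoke the definition $\mu^*_1 = \mu^*/\mu^*(E)$, which gives $\mu^*_1({\bm 0}) = \mu^*({\bm 0})/\mu^*(E)$ at once, provided $\mu^*(E) \in (0,\infty)$ so that the normalization is legitimate. Finiteness of $\mu^*(E)$ is exactly what the summability \eqref{e:summfinite} of Proposition \ref{p:finitenessofmubar} guarantees under \eqref{as:stable} (together with \eqref{e:muE}), while strict positivity follows from $\mu^*({\bm 0}) > 0$; thus $\mu^*_1$ is a genuine probability measure and the ratio is well defined.

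For the second equality I would substitute the two values fixed or computed earlier: the normalization choice \eqref{e:setmu0}, namely $\mu^*({\bm 0}) = 1-\rho$, into the numerator, and the total-mass formula \eqref{e:muE}, namely $\mu^*(E) = 1-\rho + \lambda_0\nu$, into the denominator. This immediately yields
\[
\mu^*_1({\bm 0}) = \frac{\mu^*({\bm 0})}{\mu^*(E)} = \frac{1-\rho}{1-\rho + \lambda_0\nu},
\]
which is the claimed formula.

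Since every component has already been established, I anticipate no genuine obstacle; the only point requiring care is that the denominator be finite and strictly positive, which is immediate from \eqref{e:muE} because $1-\rho > 0$ under \eqref{as:stable} and $\lambda_0\nu \ge 0$. It is worth noting, as the section's motivation anticipates, that this expression differs from the empty-system probability $q(0) = 1-\rho$ observed at service completions: the two coincide if and only if $\lambda_0\nu = \rho = \lambda_0\bar\nu$, i.e. exactly when $\nu = \bar\nu$, which recovers the familiar $M/G/1$ identity (where $f \equiv 1$ and $F(r) = r$) but fails for a general reshaping function. This is the precise sense in which arrivals to an $rM/G/1$ system do not see time averages.
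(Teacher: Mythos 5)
Your proposal is correct and follows exactly the route the paper intends: the corollary is stated as an immediate consequence of Proposition \ref{p:proofstationary} (which identifies $\mu^*_1=\mu^*/\mu^*(E)$ as the stationary measure), the normalization choice \eqref{e:setmu0} for $\mu^*({\bm 0})$, and the total-mass computation \eqref{e:muE}. Your added checks on finiteness and positivity of $\mu^*(E)$, and the closing remark on when the formula collapses to $q(0)=1-\rho$, are accurate but not part of the paper's (essentially one-line) argument.
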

\subsection{Expected queue length}
As Corollary \ref{c:emptysystemprob} demonstrates, the 
probability of a stable $rM/G/1$ being empty under its continuous time
stationary distribution does not in general 
equal the same probability under its stationary distribution at service
completion or arrival times:
\[
P_{\mu_1^*}(N_t = 0) = \mu_1^*({\bm 0}) = \frac{1-\rho}{1-\rho + \lambda_0 \nu} \neq q(0) = 1-\rho.
\]
Thus, in general, the steady state queue length distribution of
a stable $rM/G/1$ system in continuous
time differs from the same distribution at service completion and
arrival times.
The following proposition gives a formula for
${\mathbb E}_{\mu_1^*}[N_1]$,under $\mu_1^*$, the expected queue length under
the stationary distribution
in continuous time; in general, this quantity will not equal ${\mathbb E}_q[{\mathcal N}_1]$,
the expected queue length under the
stationary distribution at service completion times.
\begin{proposition}\label{p:meanformula}
The expected $rM/G/1$ queue length under its 
continuous time stationary measure equals
\begin{equation}\label{e:meanformula}
{\mathbb E}_{\mu^*}[N_t] = 
\frac{1}{\mu^*(E)}
\left( \lambda_0^2 \int_0^\infty \left( \int_0^x u f(u)  du\right) g(x)dx
+
\left( (1-\rho)+  {\mathbb E}_q[{\mathcal N}_1] \right)\lambda_0 \nu \right)
\end{equation}
where ${\mathbb E}_q[{\mathcal N}_1]$ is the stationary mean queue length at service completion
times whose formula is given in \eqref{e:pk1}.
\end{proposition}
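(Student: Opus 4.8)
The plan is to collapse the infinite sum defining the expected queue length into a single first-order ODE by summing the balance equations weighted by the queue length, in direct analogy with the derivation of $S(r)$ in Proposition \ref{p:mu1starE}. Since $\mu^*_1 = \mu^*/\mu^*(E)$ and $N_t = 0$ on the empty state ${\bm 0}$, only the densities on $E_k$, $k\ge 1$, contribute, so I would first write
\[
{\mathbb E}_{\mu^*_1}[N_t] = \frac{1}{\mu^*(E)}\int_0^\infty W(r)\, dr, \qquad W(r) \doteq \sum_{k=1}^\infty k\, m^*(k,r),
\]
reducing the whole statement to the evaluation of $\int_0^\infty W(r)\, dr$.

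The main computation is to multiply the balance equation at level $k$ by $k$ and sum over $k\ge 1$ (the $k=1$ case being the second line of \eqref{e:defA}, the $k>1$ cases being \eqref{e:odepart3}). The derivative terms assemble into $W'(r)$. The two arrival terms, the outflow $-\lambda_0 f(r)\sum_k k\, m^*(k,r) = -\lambda_0 f(r)W(r)$ and the inflow $\lambda_0 f(r)\sum_{k\ge 2}k\, m^*(k-1,r)$, combine after the reindexing $\sum_{k\ge 2}k\, m^*(k-1,r)=\sum_{j\ge 1}(j+1)m^*(j,r)=W(r)+S(r)$, so the $W(r)$ terms cancel and only $\lambda_0 f(r) S(r)$ survives. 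The empty-state and service-completion boundary terms contribute a multiple of $g(r)$, and the upshot is the ODE
\[
W'(r) + \lambda_0 f(r) S(r) + \Big(\lambda_0 \mu^*({\bm 0}) + \sum_{k=1}^\infty k\, m^*(k+1,0)\Big) g(r) = 0.
\]
To pin down the constant coefficient of $g(r)$ I would use \eqref{e:expectedlength1}, which gives $\sum_{k\ge 1}k\, m^*(k+1,0)=\lambda_0{\mathbb E}_q[{\mathcal N}_1]$, together with $\lambda_0\mu^*({\bm 0})=\lambda_0(1-\rho)$ from \eqref{e:barm10} and \eqref{e:setmu0}; the coefficient is then $\lambda_0\big((1-\rho)+{\mathbb E}_q[{\mathcal N}_1]\big)$.

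Substituting $S(r)=\lambda_0 G(r)$ from \eqref{e:valueofS0} and integrating by parts, using $W(\infty)=0$ and $rW(r)\to 0$ (justified by the finiteness in Proposition \ref{p:finitenessofmubar} and the decaying representation \eqref{e:barmk2g}), I would obtain
\[
\int_0^\infty W(r)\, dr = -\int_0^\infty r\, W'(r)\, dr = \lambda_0^2\int_0^\infty r f(r) G(r)\, dr + \lambda_0\big((1-\rho)+{\mathbb E}_q[{\mathcal N}_1]\big)\nu,
\]
where $\int_0^\infty r g(r)\,dr=\nu$. A final application of Fubini, $\int_0^\infty r f(r) G(r)\, dr = \int_0^\infty\big(\int_0^x u f(u)\,du\big) g(x)\, dx$, together with division by $\mu^*(E)$, reproduces \eqref{e:meanformula}.

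The main obstacle I anticipate is bookkeeping rather than genuine analysis: both the cancellation of the $W(r)$ terms and the collapse of the boundary terms to a single constant depend on getting the reindexing exactly right. In particular, the identification $m^*=\lambda_0 q$ of \eqref{e:exactmultiplier} carries a one-step index shift, $m^*(k,0)=\lambda_0 q(k-1)$ rather than $\lambda_0 q(k)$, as is visible in the eigenvector relation \eqref{e:eigenvector}--\eqref{e:toprove0}; overlooking this shift changes the $g(r)$ coefficient by $\lambda_0$ and throws the final answer off by exactly $\lambda_0\nu$. The remaining points, interchange of summation with differentiation and integration and the decay $rW(r)\to 0$, follow from the absolute convergence guaranteed by \eqref{e:summfinite} and should be routine.
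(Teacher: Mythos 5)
Your proposal is correct and follows essentially the same route as the paper: define the first-moment density $\varphi(r)=\sum_k k\,m^*(k,r)$, sum the $k$-weighted balance equations to get the ODE $\varphi'+\lambda_0 f S+\lambda_0\bigl((1-\rho)+{\mathbb E}_q[{\mathcal N}_1]\bigr)g=0$, and integrate. The only (immaterial) difference is that the paper solves the ODE explicitly for $\varphi$ before integrating, whereas you integrate by parts against $r$ directly; your side remarks on the index shift $m^*(k,0)=\lambda_0 q(k-1)$ and on the decay needed for the boundary term are both accurate.
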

\begin{proof}
The proof proceeds parallel to that of Proposition \ref{p:mu1starE}.
Set
\begin{equation}\label{d:defvarphi}
\varphi(r)\doteq \sum_{k=1}^\infty m^*_{k}(r) k;
\end{equation}
by definition
\begin{equation}\label{e:meanfirststep}
{\mathbb E}_{\mu^*_1}[N_t] = 
{\mathbb E}_{\mu^*}[N_t]/\mu^*(E) =\frac{1}{\mu^*(E)} \int_0^\infty \varphi(r)dr.
\end{equation}
Let us compute
$\int_0^\infty \varphi(r)dr$.
Multiply the first and the second lines of the
balance equation \eqref{e:balance} by $k$, $k=1,2,3,4,...$
and sum over $k$ to get
\begin{equation}\label{e:dervarphi}
\frac{d \varphi}{dr}(r) + 
\lambda_0 f(r) S(r)
+ \left( \mu^*({\bf 0})\lambda_0 + \sum_{k=1}^\infty m^*_{k+1}(0)k \right)g(r) = 0,
\end{equation}
where, as before,
\[
S(r) =\sum_{k=1}^\infty m^*_k(r) = S(r) = \lambda_0 G(r);
\]
the last equality follows from \eqref{e:valueofS0}.
\eqref{e:setmu0} and \eqref{e:expectedlength1}
simplify the terms in paranthesis in \eqref{e:dervarphi}
to
\[
\left( \mu^*({\bf 0})\lambda_0 + \sum_{k=1}^\infty m^*_{k+1}(0)k \right)
= 
\lambda_0 \left( (1-\rho)+   {\mathbb E}_q[{\mathcal N}_1] 
\right),
\]
where ${\mathbb E}_q[{\mathcal N}_1] $ is the stationary mean queue length at service completions.
Then, the unique solution of \eqref{e:dervarphi} vanishing at $\infty$ is
\[
\varphi(r) =  
\lambda_0^2 \int_r^\infty f(u) G(u)du
+
\left( (1-\rho) + {\mathbb E}_q[{\mathcal N}_1]  \right) \lambda_0 G(r).
\]
Integrating the last display over $r$ over $[0,\infty]$ yields
\begin{align*}
\int_0^\infty \varphi(r)dr 
&= \lambda_0^2 \int_0^\infty \int_r^\infty f(u) G(u)du dr
+
\left( (1-\rho)+  {\mathbb E}_q[{\mathcal N}_1]  \right)\lambda_0 \nu\\
&= \lambda_0^2 \int_0^\infty \left( \int_0^x u f(u)  du\right) g(x)dx
+
\left( (1-\rho)+  {\mathbb E}_q[{\mathcal N}_1]  \right)\lambda_0 \nu.
\end{align*}
This and \eqref{e:meanfirststep} give \eqref{e:meanformula}.
\end{proof}
\subsection{Average arrival rate}\label{ss:averagear}
The random variable
\[
{\mathcal A}_{n} \doteq {\mathcal N}_{n+1} - {\mathcal N}_n + 1,
\]
represents the number of arrivals to the $rM/G/1$ system between the
$n^{th}$ and $(n+1)^{st}$ service completions. It follows from \eqref{e:conddistinc}
that its conditional distribution given in ${\mathcal N}_n$ is
\begin{equation}\label{e:conddistinc}
P( \ {\mathcal A}_{n} = j | {\mathcal N}_n)
= \begin{cases} p(j), &{\mathcal N}_n > 0 \\
	 p(j+1), &{\mathcal N}_n =0,
\end{cases}
\end{equation}
It follows from the Markov property of ${\mathcal N}$
that $({\mathcal N},{\mathcal A})$
is a Markov chain and is stationary whenever ${\mathcal N}$ is, with
stationary distribution
\[
P({\mathcal A}_\infty = j, {\mathcal N}_\infty =  k) = 
 \begin{cases} p(j)q(k), & k >  0 \\
	 p(j+1)q(0), & k =0.
\end{cases}
\]
Then by the ergodic theorem for stable Markov chains
\begin{align}\label{e:ergodiclim1}
\lim_{n\rightarrow \infty} 
\frac{1}{n} \sum_{n=1}^\infty {\mathcal A}_n  &= 
q(0)\left( 1 + \sum_{j=1}^\infty j p(j)\right) + 
(1-q(0))\left( \sum_{j=1}^\infty j p(j) \right) \notag \\
&= q(0) + \rho = 1-\rho + \rho = 1.
\end{align}

Define the interservice time
\[
\tau_n \doteq S_{n+1} - S_n;
\]
similar to the sequence ${\mathcal A}_n$ the distribution of
$\tau_n$ is completely determined by ${\mathcal N}$ with the
following conditional distribution:
\begin{equation}\label{e:conddistinc}
P( \tau_n > t | {\mathcal N}_n)
= \begin{cases} G(t), &{\mathcal N}_n > 0 \\
\int_0^{\infty} \lambda_0 e^{-\lambda_0 s} G((t-s)^+)ds, &{\mathcal N}_n =0,
\end{cases}
\end{equation}
where the second distribution is the convolution of $g$ and the exponential
distribution with rate $\lambda_0$ (this is the distribution
of the sum of a service time and the first arrival time to the system).
The process
$({\mathcal N}, \tau)$ is stable whenever ${\mathcal N}$ is, with the
stationary distribution
\[
P(\tau_\infty  > t, {\mathcal N}_\infty =  k) = 
 \begin{cases} G(t) q(k),&  k > 0, \\
\left(\int_0^{\infty} \lambda_0 e^{-\lambda_0 s} G((t-s)^+)ds\right) q(0),& k=0.
\end{cases}
\]
The law of large numbers for Markov
chains \cite[Theorem 17.0.1, page 422]{meyn2012markov} implies
\begin{align}\label{e:ergodiclim2}
\lim_{n\rightarrow \infty} 
\frac{1}{n} \sum_{n=1}^\infty \tau_n =
\lim_{n\rightarrow \infty} 
\frac{1}{n} S_n  &= 
q(0)\left(\frac{1}{\lambda_0} + \nu\right) + (1-q(0)) \nu\notag\\
& = q(0)\frac{1}{\lambda_0}  + \nu = (1-\rho)\frac{1}{\lambda_0} + \nu.
\end{align}
\begin{proposition}\label{p:arrivals}
Let $A_t$ denote the number of arrivals to an $rM/G/1$ queue up to time
$t$. Then the ergodic average arrival rate to the $rM/G/1$ system equals
\begin{equation}\label{e:arrivalrate}
\lim_{t\rightarrow \infty} \frac{A(t)}{t} = \alpha \doteq
\frac{\lambda_0}{1- \rho + \lambda_0\nu} = \frac{\lambda_0}{\mu^*(E)}.
\end{equation}
\end{proposition}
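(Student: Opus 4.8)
The plan is to convert the two ergodic averages already established on the embedded (service-completion) time scale into the continuous-time arrival rate by a ratio-limit argument followed by a monotonicity squeeze. The limit \eqref{e:ergodiclim1} records that the long-run number of arrivals per interservice cycle equals $1$, while \eqref{e:ergodiclim2} records that the long-run length of a cycle equals $\frac{1-\rho}{\lambda_0}+\nu$; heuristically the arrival rate should be the quotient of these two quantities, and the argument below makes this precise.

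First I would evaluate the rate along the subsequence of service completion times $\{S_N\}$. Since ${\mathcal A}_n$ counts exactly the arrivals in the interval $(S_n,S_{n+1}]$, summing telescopes to
\[
A(S_N) = A(S_1) + \sum_{n=1}^{N-1}{\mathcal A}_n,
\]
where $A(S_1)$, the number of arrivals before the first service completion, is almost surely finite (conditioned on $\sigma_1=r$ it is a sum of a.s. finite Poisson counts). Dividing by $N$ and invoking \eqref{e:ergodiclim1} gives $A(S_N)/N \to 1$, the boundary term $A(S_1)/N$ and the off-by-one in the summation being $o(1)$. Likewise \eqref{e:ergodiclim2} gives $S_N/N \to \frac{1-\rho}{\lambda_0}+\nu$, which is finite and strictly positive under the stability assumption \eqref{as:stable}. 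Taking the quotient,
\[
\frac{A(S_N)}{S_N}=\frac{A(S_N)/N}{S_N/N}\;\longrightarrow\;\frac{1}{\frac{1-\rho}{\lambda_0}+\nu}=\frac{\lambda_0}{1-\rho+\lambda_0\nu}=\frac{\lambda_0}{\mu^*(E)}=\alpha,
\]
the last two equalities by \eqref{e:muE}.

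It remains to upgrade this subsequential limit to the full limit as $t\to\infty$. For each $t$ let $N=N(t)$ be the unique index with $S_N\le t< S_{N+1}$. Because $S_N/N$ converges to a finite positive constant, $S_N\to\infty$ (so $N(t)\to\infty$ as $t\to\infty$) and $S_{N+1}/S_N\to 1$. The process $A$ is nondecreasing, so $A(S_N)\le A(t)\le A(S_{N+1})$, and combining with $S_N\le t\le S_{N+1}$ yields
\[
\frac{A(S_N)}{S_N}\cdot\frac{S_N}{S_{N+1}}\;\le\;\frac{A(t)}{t}\;\le\;\frac{A(S_{N+1})}{S_{N+1}}\cdot\frac{S_{N+1}}{S_N}.
\]
By the previous paragraph both outer expressions converge to $\alpha\cdot 1=\alpha$, so the squeeze theorem gives $A(t)/t\to\alpha$, which is \eqref{e:arrivalrate}.

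The one place requiring care is exactly this passage from discrete to continuous time: the ergodic theorems supply the averages only along the embedded clock, and what makes the transfer work is the finiteness and positivity of the mean cycle length $\frac{1-\rho}{\lambda_0}+\nu$, which simultaneously forces $S_N\to\infty$ and $S_{N+1}/S_N\to 1$. Both rest on the stability assumption \eqref{as:stable}; the remaining ingredients (the telescoping count and the boundary term $A(S_1)$) are routine bookkeeping.
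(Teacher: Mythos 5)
Your proof is correct and follows essentially the same route as the paper: both convert the embedded-chain ergodic limits \eqref{e:ergodiclim1} and \eqref{e:ergodiclim2} into the ratio $A(S_N)/S_N\to\alpha$ and then squeeze $A(t)/t$ between consecutive service completions using monotonicity. The only (minor) difference is in justifying the squeeze: the paper controls the gap via a Borel--Cantelli argument showing ${\mathcal A}_n/S_n\to 0$, whereas you deduce $S_{N+1}/S_N\to 1$ directly from the law of large numbers for the cycle lengths, which is a slightly cleaner way to close the same step.
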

\begin{proof}
\begin{equation}\label{e:basiclimit}
\lim_{n\rightarrow \infty} 
\frac{A_{T_n}}{T_n} = 
\frac{A_{T_n}/n}{T_n/n} = 
\frac{\lambda_0}{1- \rho + \lambda_0\nu}
\end{equation}
follows from \eqref{e:ergodiclim1} and \eqref{e:ergodiclim2}.
For any other sequence $t_m \nearrow \infty$ we know that
there exists a sequence $n_m$ with $T_{n_m} < t_m <  T_{n_m+1}$.
Borel Cantelli Lemma and that ${\mathcal A}_n$ has finite moments 
independent of $n$ imply
\begin{equation}\label{e:errorterm}
\lim_{n\rightarrow} \frac{{\mathcal A}_n}{T_n} = 0.
\end{equation}
It follows from the monotonicity of $T_n$ and $A_n$ that
\[
\frac{A_{T_{n_m}}}{T_{n_m+1}} \le \frac{A_{t_m}}{t_m} \le
\frac{A_{T_{n_m+1}}}{T_{n_m}}.
\]
This, \eqref{e:errorterm} and \eqref{e:basiclimit} imply 
\eqref{e:arrivalrate}.
\end{proof}

\subsection{Average sojourn and waiting time}
Let ${\varsigma}_k$ be the sojourn time (the total amount of time
spent) of the $k^{th}$ customer arriving to the
system.
Little's law is the following statement
\begin{equation}\label{e:littleslaw}
\lim_{n\rightarrow \infty} \frac{1}{n} \sum_{k=1}^n \varsigma_k =
\frac{\lim_{t\rightarrow} N_t/t}{\lim_{t\rightarrow \infty} A_t/t}.
\end{equation}
The classical proof of this result outlined in
\cite{little2008little} depends on the distribution of $X$
only to the following extent:
that $N$ represents the number of customers in a single server queueing
system and that the ergodic limits related to $N$ and $A$; 
the existence of the ergodic
limits follow from the stationarity of $N$ 
(see, e.g., \cite[Theorem 1.6.4, page 50]{baccelli2012palm}) and
Proposition \ref{p:arrivals} above.
Therefore, the classical proof requires no change for the current setup.
For the $rM/G/1$ system,
\eqref{e:littleslaw} and Proposition \ref{p:arrivals} give
the following formula for the average sojourn time:
\begin{align}\label{e:averagesojourntime}
\lim_{n\rightarrow \infty} \frac{1}{n} \sum_{k=1}^n \varsigma_k &=
\frac{{\mathbb E}_{\mu^*_1}[N_t]}{\alpha} \notag\\
&= 
\lambda_0 \int_0^\infty \left( \int_0^x u f(u)  du\right) g(x)dx
+
\left( (1-\rho)+  {\mathbb E}_q[{\mathcal N}_1]  \right) \nu.
\end{align}
This gives the following formula for the average waiting time
\begin{align}
\omega &\doteq \label{d:omega}
\lambda_0 \int_0^\infty \left( \int_0^x u f(u)  du\right) g(x)dx
+
\left( (1-\rho)+  {\mathbb E}_q[{\mathcal N}_1]  \right) \nu - \nu\\
&=\lambda_0 \int_0^\infty \left( \int_0^x u f(u)  du\right) g(x)dx
+
\left(   {\mathbb E}_q[{\mathcal N}_1]  - \rho \right) \nu,\notag
\end{align}
where ${\mathbb E}_q[{\mathcal N}_1]$ can be computed with formula \eqref{e:pk1}.

\subsection{Connection to $M/G/1$ queue in continuous time}\label{ss:MG1c2}
By Corollary \ref{c:MG1eq1} we know that the embedded
random walk  ${\mathcal N}$ at service completions
of the $rM/G/1$ queue has identical dynamics to
that of an $M/G/1$ queue with constant rate $\lambda_0$ and sequence of
 service times $\{F(\sigma_1), F(\sigma_2),....\}$- which implies
that that these systems have the same stationary measures at
service completions. 
Then a natural question is whether there is  a similar correspondence
between the continuous time stationary distributions.
When $f$ takes the value $0$ over a nonzero interval
its integral $F$ becomes not-invertible.
Because of this, in general, the continuous time stationary distribution
of the $M/G/1$ system cannot completely be mapped to that of the
$rM/G/1$ system (remember that $\sigma_i$ has density $g$; when $f=0$
is allowed $F(\sigma_i)$ may have no density and
$F(\sigma_i)$ may have compact support even when $\sigma$  takes values in
all of ${\mathbb R}_+$).
However, for $f > 0$ an exact mapping between
the stationary measures is possible; the details follow.

Assuming $ f > 0 $ implies $F(r)= \int_0^r f(u)du$ is strictly
increasing.  Let $H$ denote its inverse function; that $F$ is differentiable
implies the same for $H$ and the inverse function has the derivative
\begin{equation}\label{e:derH}
\frac{dH}{ds}(s) = \frac{1}{f(H(s))}.
\end{equation}
Define
\[
\bar{g}(s) \doteq g(H(s))\frac{dH}{ds}(s) = \frac{g(H(s))}{f(H(s))} , s > 0.
\]
For $f > 0$,
the change of variable formula of calculus implies that
$F(\sigma_i)$ has density $\bar{g}.$
The same formula allows one to rewrite 
the operator ${\mathfrak A}^*$ defining the balance equations
of the $rM/G/1$ system
in the $s = F(r)$ variable thus:
\begin{equation}\label{e:defbarA}
\bar{\mathfrak A}^*(\mu)(x) = 
\begin{cases}
\frac{d}{ds} m(k,s)  + \lambda_0 (m(k-1,s) - m(k,s)) + m(k+1,0)\bar{g}(s), k > 1, s > 0,\\
\frac{d}{ds} m(1,s) + \bar{g}(s)\lambda_0 \mu({\bm 0})  + 
\bar{g}(s)m(2,0) 
-  \lambda_0  m(1,s), s > 0,\\
\mu({\bm 0}) \lambda_0 -m(1,0),
\end{cases}
\end{equation}
$\mu \in {\mathscr M}.$
The equation
\begin{equation}\label{e:balance2}
\bar{\mathfrak A}^*(\mu) = 0
\end{equation}
is the balance equation of the $M/G/1$ system with rate $\lambda_0$
and service density $\bar{g}.$ Let us denote the continuous time
process representing this $M/G/1$ system by $\bar{X}$ (which
can be written in the PDP
framework employed in Section \ref{s:construct}).
The relation between
the solution of \eqref{e:balance2}
and the solution of the balance equation \eqref{e:balance} is
given in the following proposition.
\begin{proposition}\label{l:frommubartomu}
Assume $f > 0.$
Let $\mu^*$ be the solution of \eqref{e:balance} given in Proposition
\ref{p:muexistsunique}.
Then $\bar{\mu}^* \in {\mathscr M}$ defined by $\bar{\mu}^*({\bm 0})\doteq
\bar{\mu}^*({\bm 0})$ and by the densities
$\bar{m}^*(k,s) \doteq m^*(k,H(s))$ on $E_k$, $k=1,2,3,...$ solves
\eqref{e:balance2} and does so uniquely up to the choice of
$\bar{\mu}^*({\bm 0}).$ Furthermore, if the stability condition
\ref{as:stable} holds and $\bar{\mu}^*({\bm 0})$ is set to  $1-\rho$ we
 have $\bar{\mu}^*(E) =1$ and $\bar{\mu}^*$ is the unique continuous time
stationary measure of $\bar{X}.$
\end{proposition}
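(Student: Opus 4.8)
The plan is to exploit the observation made in the paragraph preceding the statement, namely that $\bar{\mathfrak A}^*$ is nothing but ${\mathfrak A}^*$ rewritten in the coordinate $s = F(r)$; consequently, verifying that $\bar\mu^*$ solves \eqref{e:balance2} reduces to the chain rule together with the positivity of $f$. First I would record the elementary identities produced by the substitution $r = H(s)$: since $H(0) = 0$ one has $\bar m^*(k,0) = m^*(k,0)$, so all boundary values are preserved; by \eqref{e:derH},
\[
\frac{d}{ds}\bar m^*(k,s) = \frac{dm^*}{dr}(k,H(s))\,\frac{1}{f(H(s))};
\]
and $\bar g(s) = g(H(s))/f(H(s))$ by definition. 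Substituting these into each of the three lines of $\bar{\mathfrak A}^*(\bar\mu^*)(s)$ and comparing with ${\mathfrak A}^*(\mu^*)$ evaluated at $r = H(s)$, I would observe that the first two lines of $\bar{\mathfrak A}^*(\bar\mu^*)(s)$ are exactly $f(H(s))^{-1}$ times the corresponding lines of ${\mathfrak A}^*(\mu^*)(H(s))$, while the atomic (third) line coincides verbatim. Since $\mu^*$ solves \eqref{e:balance} and $f(H(s)) > 0$, every line vanishes, giving $\bar{\mathfrak A}^*(\bar\mu^*) = 0$.

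For the uniqueness-up-to-$\bar\mu^*({\bm 0})$ claim I would note that, because $f > 0$, the map $s \mapsto H(s)$ is a $C^1$ bijection of $(0,\infty)$ with $C^1$ inverse $F$, so the assignment $\mu \mapsto \bar\mu$ (leaving the atom unchanged and sending the density $m(k,\cdot)$ to $m(k,H(\cdot))$) is a bijection of $\mathscr M$ onto itself which, by the computation just made, carries solutions of \eqref{e:balance} to solutions of \eqref{e:balance2} and conversely. Hence the uniqueness of $\mu^*$ established in Proposition \ref{p:muexistsunique} transfers directly to $\bar\mu^*$. Equivalently, one may simply apply Proposition \ref{p:muexistsunique} to $\bar X$, which is itself an $rM/G/1$ system with constant reshaping $\bar f \equiv 1$ and service density $\bar g$.

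For the normalization I would first confirm that the stability parameter of $\bar X$ equals $\rho$: its mean service time is $\int_0^\infty s\,\bar g(s)\,ds$, and the substitution $s = F(r)$, $ds = f(r)\,dr$ turns this into $\int_0^\infty F(r)g(r)\,dr = \bar\nu$, so $\bar X$ has stability parameter $\lambda_0\bar\nu = \rho$ and is stable precisely when \eqref{as:stable} holds. Applying the mass formula \eqref{e:muE} to $\bar X$, whose mean service time is $\bar\nu$ rather than $\nu$, then gives $\bar\mu^*(E) = (1-\rho) + \lambda_0\bar\nu = (1-\rho)+\rho = 1$ once $\bar\mu^*({\bm 0})$ is set to $1-\rho$. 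This cancellation is the conceptual crux: it is exactly the classical coincidence, for the $M/G/1$ queue, of the continuous-time empty probability $1-\rho$ with the empty probability at service completions, and it is what distinguishes $\bar X$ from the original $rM/G/1$ system, for which $\mu^*(E) = 1-\rho+\lambda_0\nu \neq 1$ in general.

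Finally, I would invoke Proposition \ref{p:proofstationary}, which applies to $\bar X$ as a special $rM/G/1$ system: since $\bar\mu^*$ is the unique solution of \eqref{e:balance2} with total mass $1$, it is the unique continuous-time stationary measure of $\bar X$. There is no deep obstacle here; the argument is a transfer of already-established results through the change of variables, and the only step requiring genuine care is the bookkeeping of boundary values and of the division by $f(H(s))$ in the first paragraph.
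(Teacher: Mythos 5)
Your proposal is correct and follows the same overall architecture as the paper's proof: chain-rule verification that the transformed densities annihilate $\bar{\mathfrak A}^*$, uniqueness transferred from Proposition \ref{p:muexistsunique} (the paper cites linearity of \eqref{e:balance2}, you phrase it as a bijection of solution sets; these are the same observation), and stationarity of $\bar X$ by rerunning the argument of Proposition \ref{p:proofstationary}. The one place where you genuinely diverge is the normalization $\bar\mu^*(E)=1$: the paper computes it fiberwise, showing $\bar\mu^*(E_k)=\int_0^\infty m^*(k,r)f(r)\,dr=q(k)$ via \eqref{e:integralrepbmk0} and \eqref{e:exactmultiplier}, and then sums the probability measure $q$; you instead re-apply the mass formula \eqref{e:muE} to $\bar X$ viewed as an $rM/G/1$ system with $f\equiv 1$, where both the utilization and the mean service time equal $\bar\nu$, so the total mass collapses to $1-\rho+\lambda_0\bar\nu=1$. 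Both computations are valid; yours is shorter and isolates the conceptual reason the $M/G/1$ normalization is automatic, while the paper's yields the stronger by-product $\bar\mu^*(E_k)=q(k)$, i.e.\ that the continuous-time queue-length marginal of $\bar X$ coincides with its service-completion distribution, which is the PASTA-type identity underlying the whole section.
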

\begin{proof}
${\mathfrak A}^*(\mu^*) = 0
\Rightarrow
\bar{\mathfrak A}^*(\bar\mu^*) = 0 
$
follows from the chain rule.
The uniqueness claim follows from the linearity of \eqref{e:balance2}.
That $\bar{\mu}^*(E) = 1$ under the assumptions \eqref{as:stable} and
$\bar{\mu}^*({\bm 0}) =1-\rho$ follows from the
following observation:
\[
\bar{\mu}^*(E_k) = \int_0^\infty \bar{m}^*(k,s)ds = 
\int_0^\infty m^*(k,r) f(r)dr = q(k), k > 0;
\]
the first equality follows from the change of variable $r=H(s)$
and the last equality follows from \eqref{e:integralrepbmk0} 
and \eqref{e:exactmultiplier}.
That $\bar{\mu}^*$ is the stationary measure of $\bar{X}$ is
proved exactly as in the proof of Proposition \ref{p:proofstationary}.
\end{proof}

\section{Illustration}\label{s:example}
\begin{figure}[h]
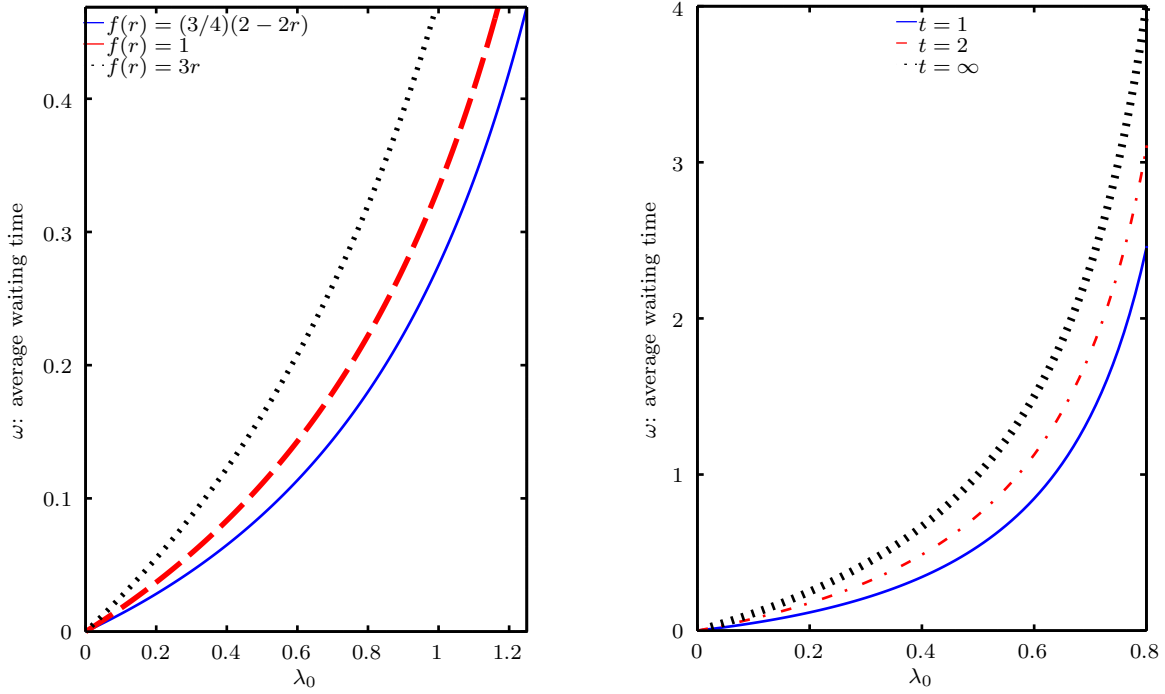

\centering
\hspace{-0.5cm}
\begin{subfigure}{0.48\textwidth}
\input{graph1}
\caption{$g(r) =\indic_{(0,1)}(r)$}
\label{f:exampleuni}
\end{subfigure}
\hspace{0.5cm}
\begin{subfigure}{0.48\textwidth}
\input{graph2}
\caption{$g(r) = e^{-r}$, $f(r) = \frac{1}{1-e^{-t}}\indic_{(0,t)}(r)$}
\label{f:examplex}
\end{subfigure}
\caption{Average waiting time as a function of the average arrival rate for different reshaping functions}\label{figure1}
\end{figure} 
Let us now observe the consequences of the results derived
in the previous section over two examples.
Figure \ref{f:exampleuni} shows
the average waiting times for three $rM/G/1$ systems 
with uniformly distributed service time on the interval $[0,1]$,
as a function
of the arrival rate $\lambda_0.$
We consider three cases for the reshaping function $f$; increasing, constant  
and decreasing in $r$: $f(r) =\frac{3}{4}(2-2r) \indic_{(0,1)}(r)$, 
$f(r)= \indic_{(0,1)}(r)$ and $f(r) = 3r \indic_{(0,1)}(r)$
(the constant case corresponds to the $M/U/1$ queue). 
All of these reshape functions
$f$ satisfy \eqref{e:l0unchanged}, therefore they all have
the same average arrival rate $\alpha = \lambda_0$, 
utilization $\rho = \lambda_0/2$
and empty system probability $\mu^*({\bm 0}) = q(0) = 1-\rho.$ 
Moreover for all of these reshape functions $f$ and the assumed
system parameters, the formula \eqref{d:omega} for the average
waiting time has simple explicit expressions, 
for $f(r) =\frac{3}{4}(2-2r) \indic_{(0,1)}(r)$,
\[
\omega = \omega_1 = \frac{\lambda_0}{8} + \frac{3\lambda_0^2}{40(1-\rho)},
\]
for $f(r)= \indic_{(0,1)}(r)$ (this is the $M/U/1$ case)
\[
\omega = \omega_2 = \frac{\lambda_0}{6} + \frac{\lambda_0^2}{12(1-\rho)}.
\]
and for $f(r) = 3r \indic_{(0,1)}(r)$,
\[
\omega = \omega_3 = \frac{\lambda_0}{4} + \frac{9\lambda_0^2}{80 (1-\rho)}.
\]
We note $\omega_1 < \omega_2 < \omega_3$ for all $\lambda_0$ such that
$\rho < 1$: i.e., pushing arrivals towards service completions (while
keeping the average arrival rate constant) reduces average waiting times.
Figure \ref{f:exampleuni} shows the graphs of these functions as
$\lambda_0$ varies.

Let us now consider an example in which the service time is exponentially
distributed mean $\nu = 1$ and the reshaping function 
$f(r) = (1-e^{-t})^{-1}\indic_{0\leq r \leq t}$;  this function restricts
arrivals to the last $t$ units of time of service and it satisfies
\eqref{e:l0unchanged}. The average waiting time $\omega$ of \eqref{d:omega}
reduces for this case to
\[
\omega(t,\lambda_0)
= 
\lambda_0 \frac{1 -e^{-t}(t+1)}{1-e^{-t}}\left( 1 + 
\frac{\lambda_0^2}{(1-e^{-t})(1-\rho)}\right);
\]
$\omega$ is increasing in $t$, i.e., once again, concentrating
arrivals near service completions (while keeping the average arrival rate constant) reduces the average waiting time.
Figure \ref{f:examplex}, 
shows the graph of $\omega(t,\cdot)$ for $t=1,2$ and $t=\infty$
(the last corresponds to an $M/M/1$ queue).

We conclude this section with the following observation from
our second example: set
$t=3\nu = 3$ in the last example, i.e., we restrict arrivals to the
interval $[0,3\nu]$, where $\nu$ is the mean service time. For $\lambda_0 = 0.7$,
the system's utilization is $\rho = 0.7$ and the corresponding average
waiting time turns out to be $\omega(3,0.7) =  1.6041$; the same waiting
time for the same parameter values but without reshaping is 
$\omega(\infty,0.7)=1.84$. Thus, this not so heavy reshaping reduces 
average waiting time by $13\%.$
\section{Conclusion}\label{s:conclusion}
Let us comment briefly on possible future research. We have assumed that
the service time distribution has a density $g$. The analysis at service completions
doesn't depend on this assumption and the results of Subsection \ref{ss:completion}
and Section \ref{s:servcomp} continue to hold without change 
when $\sigma_i$ doesn't have a density.
The analysis of Section \ref{s:stationarycont} does make use of the 
assumption that
$\sigma_i$ has a density but the resulting performance measure formulas 
(average queue length, probability of an empty system, average waiting time)
remain meaningful even when $\sigma_i$ doesn't have a density and one
expects these results to hold under general service distributions. 
One simple method of extending our analysis to the general case would be first a 
smooth approximation of the given service distribution and then taking weak limits. 
The details of such an argument could be given in future work. The special case of a deterministic constant 
service time case can be directly handled by appropriate modifications of the balance equation and
our arguments based on it.

A natural question is the convergence of the distribution of $X_t$
to the stationary distribution $\mu^*$. As one of the referees pointed out,
one way to establish this with precise rates of convergence would be to apply
the approach of \cite{lund1996computable} 
based on coupling (at the first hitting time to ${\bm 0}$)
and monotonicity arguments.  
Future research could attempt to give details of this.

In many situations, one may only have an estimate
of the remaining service time (rather than the ability to
directly observe it, as assumed in the current work). One possible
future work is the modeling and analysis of such a setup.
We think that, given the possible applications 
in call centers,
another natural direction is the treatment of many servers. Instead of allowing
the rate to depend directly on the remaining service times of all of the
servers a possibility is to allow it to depend on 
a function of them
(e.g., their minimum or an estimate of it). Finally, it may also be of 
interest to apply the approach used in the present
article to models
where the arrival and service rates depend
on the queue length as well as the remaining service time.

\begin{singlespace}
\bibliography{1}
\end{singlespace}
\end{document}